\numberwithin{equation}{section}
\newtheorem{Theorem}{Theorem}[section]
\newtheorem*{Theorem*}{Theorem}
\newtheorem{Lemma}[Theorem]{Lemma}
\newtheorem{Proposition}[Theorem]{Proposition}
 { \theoremstyle{definition}

\newtheorem{Remark}[Theorem]{Remark} }
\title{Difference of solutions for the inversion problem of ultra-elliptic integrals}
\author{Takanori Ayano\footnote{Osaka Central Advanced Mathematical Institute, Osaka Metropolitan University, \newline \hspace{3ex} 3-3-138, Sugimoto, Sumiyoshi-ku, Osaka, 558-8585, Japan. \newline \hspace{3ex} Email: ayano@omu.ac.jp
\newline \hspace{3ex} 2020 Mathematics Subject Classification: 14H42, 14K25, 32A15. 
\newline \hspace{3ex} Key words and phrases: sigma function, hyperelliptic function, inversion problem of hyperelliptic integrals. 
}}
\date{}
\begin{document}
\maketitle

\begin{abstract}
Let $V$ be a hyperelliptic curve of genus 2 defined by $Y^2=f(X)$, where $f(X)$ is a polynomial of degree 5. 
The sigma function associated with $V$ is a holomorphic function on $\mathbb{C}^2$.  
For a point $P$ on $V$, we consider the problem to express the $X$-coordinate of $P$ in terms of the image of $P$ under the Abel--Jacobi map.   
Two meromorphic functions $f_2$ and $g_2$ on $\mathbb{C}^2$ which give solutions of this problem are known. 
Since $f_2$ and $g_2$ coincide on the zero set of the sigma function, it is expected that $f_2-g_2$ can be divided by the sigma function.  
In this paper, we decompose $f_2-g_2$ into a product of the sigma function and a meromorphic function explicitly. 
\end{abstract}

\section{Introduction}

The hyperelliptic sigma functions, which are originally introduced by F. Klein \cite{Kl1, Kl2}, are extensively studied for the last three decades (see \cite{BEL-97-1, BEL-2012, BEL-2018} and references therein). 
The hyperelliptic sigma functions play important roles in the inversion problem of hyperelliptic integrals. 
The inversion problem of hyperelliptic integrals is important in mathematical physics (see \cite{AB2019, BEL-2012, E-H-K-K-L-1, E-H-K-K-L-P-1} and references therein). 
In this paper, we consider hyperelliptic integrals of genus 2, which are called ultra-elliptic integrals. 
Several functions which give solutions of the inversion problem of ultra-elliptic integrals are introduced. 
In this paper, we study relationships among these functions. 

Let $V$ be the hyperelliptic curve of genus 2 defined by 
\[Y^2=X^5+\lambda_4X^3+\lambda_6X^2+\lambda_8X+\lambda_{10},\qquad
\lambda_i\in\mathbb{C}.\]
A basis of the vector space consisting of holomorphic 1-forms on $V$ is given by 
\[
\omega_1=-\frac{X}{2Y}{\rm d}X,\qquad
\omega_3=-\frac{1}{2Y}{\rm d}X. 
\]
Let ${\boldsymbol \omega}={}^t(\omega_1,\omega_3)$. 
Let $\operatorname{Jac}(V)$ be the Jacobian variety of $V$. 
Let us consider the Abel--Jacobi map
\[I\colon\quad V\to\operatorname{Jac}(V),\qquad P\mapsto\int_{\infty}^P{\boldsymbol \omega}.\]
Let $\sigma(u)$ with $u={}^t(u_1,u_3)\in\mathbb{C}^2$ be the sigma function associated with $V$, which is a holomorphic function on $\mathbb{C}^2$ (cf. \cite{BEL-97-1, BEL-2012}). 
Let $\sigma_i=\partial_{u_i}\sigma$, $\sigma_{ij}=\partial_{u_j}\sigma_i$, and $\wp_{ij}=-\partial_{u_i}\partial_{u_j}\log\sigma$, where $\partial_{u_i}=\frac{\partial}{\partial u_i}$. 
It is well known that the inversion problem of the map $I$ can be solved in terms of the sigma function. 
Let $f_2(u)$, $g_2(u)$, $f_5(u)$, and $g_5(u)$ be the meromorphic functions on $\mathbb{C}^2$ defined by 
\begin{gather*}
f_2(u)=-\frac{\sigma_3(u)}{\sigma_1(u)},\qquad g_2(u)=\frac{\wp_{11}(2u)}{2},\\
f_5(u)=\frac{\sigma(2u)}{2\sigma_1(u)^4},\qquad g_5(u)=\frac{\sigma_1^2\sigma_{33}-2\sigma_1\sigma_3\sigma_{13}+\sigma_3^2\sigma_{11}}{2\sigma_1^3}(u).
\end{gather*}
For $P=(X,Y)\in V$, let $v=I(P)$. 
In \cite[p.~129]{G2} and \cite[Theorem 1]{J}, the formula 
\[X=f_2(v)\]
is given. 
In \cite[Lemma 3.4]{M}, the formula 
\[X=g_2(v)\]
is given. 
In \cite[p.~128]{G} and \cite[Lemma 3.2.4]{O-98}, the formula 
\[Y=f_5(v)\]
is given. 
In \cite[p.~116]{BEL-2012} and \cite[Lemma 3.2]{AB2019}, the formula 
\[Y=g_5(v)\]
is given. 
Let $W=\big\{u\in\mathbb{C}^2\;|\;\sigma(u)=0\big\}$. 
By the Riemann vanishing theorem, the image of $V$ under the map $I$ is equal to the zero set of the sigma function in $\operatorname{Jac}(V)$. 
Since the functions $f_5$ and $g_5$ coincide on $W$, the difference $f_5-g_5$ can be divided by the sigma function. 
In \cite[Lemma 7.1]{AB2019}, by using the addition theorem for the sigma function, a decomposition of $f_5-g_5$ into a product of the sigma function and a meromorphic function on $\mathbb{C}^2$ is described explicitly. 
Since the functions $f_2$ and $g_2$ coincide on $W$, the difference $f_2-g_2$ can be divided by the sigma function. 
In \cite[Lemma 7.2]{AB2019}, in the rational limit, i.e., $\lambda_4=\lambda_6=\lambda_8=\lambda_{10}=0$, a decomposition of $f_2-g_2$ into a product of the rational limit of the sigma function and a rational function on $\mathbb{C}^2$ is described explicitly.  
In this paper, in the general case, we decompose $f_2-g_2$ into a product of the sigma function and a meromorphic function on $\mathbb{C}^2$ explicitly (Theorem \ref{2023.4.27.1}). 
To be precise, we decompose $f_2-g_2$ in the form of 
\[f_2-g_2=\sigma \frac{A}{B},\]
where $A$ and $B$ are holomorphic functions on $\mathbb{C}^2$ which are not identically equal to $0$ on $W$. 
It is a non-trivial problem to decompose $f_2-g_2$ into a product of the sigma function and a meromorphic function on $\mathbb{C}^2$ explicitly in the general case. 
We express $\wp_{11}(2u)$ in terms of $\wp_{11}(u)$, $\wp_{13}(u)$, and $\wp_{33}(u)$ by using the double-angle formula for $\wp_{11}$ and the formulae which express the higher logarithmic derivatives of the sigma function in the form of polynomials in the second and third logarithmic derivatives of the sigma function. 
Further, we eliminate $\wp_{33}(u)^k$ with $k\ge2$ from this expression of $\wp_{11}(2u)$ by using the relation between $\wp_{11}(u)$, $\wp_{13}(u)$, and $\wp_{33}(u)$, which is known as the defining equation of the Kummer surface.  
The expression of $\wp_{11}(2u)$ obtained by the above operations is described in Lemma \ref{2023.7.21.333}.    
By substituting the expressions of $\wp_{11}(u)$, $\wp_{13}(u)$, and $\wp_{33}(u)$ using the sigma function into the expression of $\wp_{11}(2u)$ in Lemma \ref{2023.7.21.333}, we can draw the sigma function from $f_2-g_2$. 

In the theory of functions, it is an important problem to give a non-trivial decomposition of a meromorphic function into a product of meromorphic functions. 
In this paper, we solve this problem for the function $f_2-g_2$.  
One of the prominent features of the sigma function is that its power series expansion around the origin begins at the Schur function and all the coefficients of the expansion are polynomials in the coefficients of the defining equation of the curve. 
In the proof of Theorem \ref{2023.4.27.1}, by using this property of the sigma function, we prove that a holomorphic function on $\mathbb{C}^2$ is not identically equal to $0$ on $W$. 
This proof implies that the above property of the sigma function is useful. 

The present paper is organized as follows. 
In Section \ref{2023.3.25.1}, we review the definition of the sigma function for the curve of genus 2. 
In Section \ref{2024.2.15}, we decompose $f_2-g_2$ into a product of the sigma function and a meromorphic function on $\mathbb{C}^2$ explicitly. 

\section{The two-dimensional sigma function}\label{2023.3.25.1}
In this section, we review the definition of the sigma function for the curve of genus 2 and give facts about it which will be used later on. 
For details of the sigma function, see \cite{BEL-97-1, BEL-2012}. 

We set
\[
M(X)=X^5+\lambda_4X^3+\lambda_6X^2+\lambda_8X+\lambda_{10},\qquad
\lambda_i\in\mathbb{C}.
\]
We assume that $M(X)$ has no multiple roots. We consider the non-singular hyperelliptic curve of genus $2$
\[
V=\big\{(X,Y)\in\mathbb{C}^2\mid Y^2=M(X)\big\}.
\]
A basis of the vector space consisting of holomorphic 1-forms on $V$ is given by 
\[
\omega_1=-\frac{X}{2Y}{\rm d}X,\qquad
\omega_3=-\frac{1}{2Y}{\rm d}X. 
\]
We consider the following meromorphic 1-forms on $V$:  
\[
\eta_1=-\frac{X^2}{2Y}{\rm d}X,\qquad
\eta_3=\frac{-\lambda_4X-3X^3}{2Y}{\rm d}X, 
\]
which are holomorphic at any point except $\infty$. 
Let $\{\mathfrak{a}_i,\mathfrak{b}_i\}_{i=1}^2$ be a canonical basis in the one-dimensional homology group of the curve $V$. We define the period matrices by
\[
2\omega'=\left(\int_{\mathfrak{a}_j}\omega_i\right), \quad 2\omega''=\left(\int_{\mathfrak{b}_j}\omega_i\right),\quad
-2\eta'=\left(\int_{\mathfrak{a}_j}\eta_i\right), \quad
-2\eta''=\left(\int_{\mathfrak{b}_j}\eta_i\right).
\]
We define the period lattice $\Lambda=\big\{2\omega'm_1+2\omega''m_2\mid m_1,m_2\in\mathbb{Z}^2\big\}$ and consider the Jacobian variety $\operatorname{Jac}(V)=\mathbb{C}^2/\Lambda$.
The normalized period matrix is given by $\tau=(\omega')^{-1}\omega''$.
Let $\tau\delta'+\delta''$ with $\delta',\delta''\in\mathbb{R}^2$ be the Riemann constant with respect to $(\{\mathfrak{a}_i,\mathfrak{b}_i\}_{i=1}^2,\infty)$. 
Then we have $\delta'={}^t\big(\frac{1}{2},\frac{1}{2}\big)$ and $\delta''={}^t\big(1,\frac{1}{2}\big)$ (cf.~\cite{Fay-1973,Mumford-1983}). 
We denote the imaginary unit by ${\bf i}$. 
The sigma function $\sigma(u)$ associated with $V$, $u={}^t(u_1,u_3)\in\mathbb{C}^2$, is defined by
\begin{equation}
\sigma(u)=C\exp\bigg(\frac{1}{2}{}^tu\eta'(\omega')^{-1}u\bigg)\theta\begin{bmatrix}\delta'\\ \delta'' \end{bmatrix}\big((2\omega')^{-1}u,\tau\big),\label{2024.2.13.1}
\end{equation}
where $\theta\begin{bmatrix}\delta'\\ \delta'' \end{bmatrix}(u,\tau)$ is the Riemann theta function with the characteristics $\begin{bmatrix}\delta'\\ \delta'' \end{bmatrix}$ defined by
\[
\theta\begin{bmatrix}\delta'\\ \delta'' \end{bmatrix}(u,\tau)=\sum_{n\in\mathbb{Z}^2}\exp\big\{\pi{\bf i}\,{}^t(n+\delta')\tau(n+\delta')+2\pi{\bf i}\,{}^t(n+\delta')(u+\delta'')\big\}
\]
and $C$ is a non-zero constant which is fixed in Theorem \ref{2023.4.23.2}. 
Let $\sigma_i=\partial_{u_i}\sigma$, $\sigma_{ij}=\partial_{u_j}\sigma_i$, $\sigma_{ijk}=\partial_{u_k}\sigma_{ij}$, $\wp_{ij}=-\partial_{u_i}\partial_{u_j}\log\sigma$, $\wp_{ijk}=\partial_{u_k}\wp_{ij}$, and $\wp_{ijk\ell}=\partial_{u_{\ell}}\wp_{ijk}$, where $\partial_{u_i}=\frac{\partial}{\partial u_i}$. 

\begin{Proposition}[{\cite[pp.~7--8]{BEL-97-1}}]\label{period}
For $m_1,m_2\in\mathbb{Z}^2$, let $\Omega=2\omega'm_1+2\omega''m_2$.
Then, for $u\in\mathbb{C}^2$, we have
\[
\sigma(u+\Omega)/\sigma(u) =(-1)^{2({}^t\delta'm_1-{}^t\delta''m_2)+{}^tm_1m_2}\exp\big\{{}^t(2\eta'm_1+
2\eta''m_2)(u+\omega'm_1+\omega''m_2)\big\}.
\]
\end{Proposition}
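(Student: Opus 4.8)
The plan is to reduce the quasi-periodicity of $\sigma$ to the classical quasi-periodicity of the Riemann theta function, and to absorb the resulting discrepancy into the Gaussian prefactor by means of the Legendre (Riemann bilinear) relations among the period matrices. Since the constant $C$ and the prefactor appear multiplicatively, the ratio $\sigma(u+\Omega)/\sigma(u)$ factors as the product of the ratio of the exponential prefactors and the ratio of the theta values; in particular $C$ drops out, so its precise value plays no role here. I would therefore treat the two factors separately and then recombine them.

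First I would record the effect of the shift $\Omega=2\omega'm_1+2\omega''m_2$ on the normalized argument $z=(2\omega')^{-1}u$. Since $\tau=(\omega')^{-1}\omega''$, one has $(2\omega')^{-1}\Omega=m_1+\tau m_2$, so $z$ is shifted by the lattice vector $m_1+\tau m_2$. The standard quasi-periodicity of $\theta\begin{bmatrix}\delta'\\ \delta''\end{bmatrix}$ under $z\mapsto z+m_1+\tau m_2$, obtained by reindexing the defining series and completing the square in the exponent, produces the factor $\exp\{2\pi\mathbf{i}({}^t\delta'm_1-{}^t\delta''m_2)\}\exp\{-\pi\mathbf{i}\,{}^tm_2\tau m_2-2\pi\mathbf{i}\,{}^tm_2(2\omega')^{-1}u\}$. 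The first exponential is exactly the sign $(-1)^{2({}^t\delta'm_1-{}^t\delta''m_2)}$ appearing in the statement, while the second contributes both a term linear in $u$ and a $u$-independent term that must be reconciled with the prefactor.

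Next I would compute the ratio of the exponential prefactors. Writing $E=\eta'(\omega')^{-1}$, the Riemann bilinear relation ${}^t\omega'\eta'={}^t\eta'\omega'$ shows that $E$ is symmetric, so this ratio equals $\exp\{{}^t\Omega E(u+\tfrac12\Omega)\}$. It then remains to add the exponent coming from the theta factor and match the total against ${}^t(2\eta'm_1+2\eta''m_2)(u+\omega'm_1+\omega''m_2)$, which I would do by separating the terms linear in $u$ from those independent of $u$. For the linear terms, the identity $E\Omega=2\eta'm_1+2\eta'\tau m_2$ together with the Legendre relation ${}^t\eta'\omega''-{}^t\omega'\eta''=\tfrac{\pi\mathbf{i}}{2}I$ converts $2\eta'\tau m_2-\pi\mathbf{i}({}^t\omega')^{-1}m_2$ into $2\eta''m_2$, reproducing precisely ${}^t(2\eta'm_1+2\eta''m_2)u$.

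The heart of the argument, and the step I expect to be the most delicate, is the matching of the $u$-independent terms. Expanding $\tfrac12{}^t\Omega E\Omega$ into its $m_1m_1$, $m_1m_2$, and $m_2m_2$ blocks and simplifying each with ${}^t\omega'\eta'={}^t\eta'\omega'$ and ${}^t\omega''\eta'={}^t\eta''\omega'+\tfrac{\pi\mathbf{i}}{2}I$ (the latter being the transpose of the Legendre relation), the symmetric blocks reproduce the corresponding blocks of ${}^t(2\eta'm_1+2\eta''m_2)(\omega'm_1+\omega''m_2)$; the $\tfrac{\pi\mathbf{i}}{2}I$ contribution in the $m_2m_2$ block cancels against the term $-\pi\mathbf{i}\,{}^tm_2\tau m_2$ from the theta factor, and the $\tfrac{\pi\mathbf{i}}{2}I$ contribution in the cross block leaves a residual $\pi\mathbf{i}\,{}^tm_1m_2$. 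This residual exponentiates to the remaining sign $(-1)^{{}^tm_1m_2}$, which combines with the theta sign to give $(-1)^{2({}^t\delta'm_1-{}^t\delta''m_2)+{}^tm_1m_2}$, completing the identification. The only genuine obstacle is the careful bookkeeping required to extract the integer ${}^tm_1m_2$ as a bona fide sign rather than leaving it buried inside the exponential, and it is exactly at this point that the precise form of the Legendre relations is indispensable.
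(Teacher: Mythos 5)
The paper gives no proof of this proposition at all: it is imported verbatim from \cite[pp.~7--8]{BEL-97-1}, so there is no internal argument to compare against. Your derivation is the standard one and it checks out: I verified that $(2\omega')^{-1}\Omega=m_1+\tau m_2$, that the theta shift yields the factor $\exp\{2\pi\mathbf{i}({}^t\delta'm_1-{}^t\delta''m_2)\}\exp\{-\pi\mathbf{i}\,{}^tm_2\tau m_2-\pi\mathbf{i}\,{}^tm_2(\omega')^{-1}u\}$ (the first exponential being a genuine sign precisely because $2\delta'$ and $2\delta''$ are integer vectors), and that with $E=\eta'(\omega')^{-1}$ symmetric the block-by-block matching works exactly as you describe: the linear-in-$u$ terms combine to ${}^t(2\eta'm_1+2\eta''m_2)u$ via ${}^t\eta'\omega''-{}^t\omega'\eta''=\frac{\pi\mathbf{i}}{2}I$, the $m_2m_2$ block cancels $-\pi\mathbf{i}\,{}^tm_2\tau m_2$, and the cross block leaves the residual $\pi\mathbf{i}\,{}^tm_1m_2$, producing $(-1)^{{}^tm_1m_2}$. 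The only caveat is that the sign in the Legendre relation depends on the orientation convention for the canonical basis $\{\mathfrak{a}_i,\mathfrak{b}_i\}$ (intersection $\mathfrak{a}_i\circ\mathfrak{b}_j=\delta_{ij}$), and your argument correctly forces that sign, since the opposite choice would fail to cancel the $-\pi\mathbf{i}\,{}^tm_2\tau m_2$ term rather than merely flip a harmless sign.
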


Let $W=\big\{u\in\mathbb{C}^2\;|\;\sigma(u)=0\big\}$. 
Proposition \ref{period} implies that $u+\Omega\in W$ for any $u\in W$ and $\Omega\in \Lambda$. 
We set $\deg\lambda_{2i}=2i$ for $2\le i\le 5$.

\begin{Theorem}[{\cite[Theorem 6.3]{BEL-99-R}, \cite[Theorem 3]{N1}}]\label{2023.4.23.2}
The sigma function~$\sigma(u)$ is a holomorphic function on $\mathbb{C}^2$ and we have the unique constant $C$ in (\ref{2024.2.13.1}) such that the power series expansion of $\sigma(u)$ around the origin has the following form:
\begin{equation}
\sigma(u)=\frac{1}{3}u_1^3-u_3+\sum_{i+3j\ge7}\mu_{i,j}u_1^{i}u_3^{j},\label{4.27.1}
\end{equation}
where $\mu_{i,j}$ is a homogeneous polynomial in $\mathbb{Q}[\lambda_4, \lambda_6, \lambda_8, \lambda_{10}]$
of degree $i+3j-3$ if $\mu_{i,j}\neq0$.
\end{Theorem}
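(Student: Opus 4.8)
The plan is to treat the assertions in turn: entireness of $\sigma$, a scaling symmetry forcing homogeneity and hence the weights of the $\mu_{i,j}$, the determination of the leading term together with $C$, and finally polynomiality, with rationality as the delicate point. Entireness is immediate: the series in (\ref{2024.2.13.1}) defining the theta function converges absolutely and uniformly on compact subsets of $\mathbb{C}^2$ because $\operatorname{Im}\tau$ is positive definite, so it is entire in its first argument; as $(2\omega')^{-1}u$ is linear in $u$ and the Gaussian prefactor is entire, $\sigma$ is entire on $\mathbb{C}^2$ and admits a Taylor expansion $\sum_{i,j}\mu_{i,j}u_1^iu_3^j$ at the origin.

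The heart of the argument is a scaling symmetry. I would assign the weights $\deg u_1=-1$, $\deg u_3=-3$, $\deg\lambda_{2k}=2k$, which are exactly the ones making $Y^2=M(X)$ weighted-homogeneous under $\deg X=2$, $\deg Y=5$; then $\deg\omega_1=-1$ and $\deg\omega_3=-3$, matching $u_1,u_3$. For $c\in\mathbb{C}^\times$ the map $(X,Y)\mapsto(c^2X,c^5Y)$ identifies $V$ with the curve whose parameters are $c^{2k}\lambda_{2k}$, multiplies $\omega_1,\omega_3$ by $c^{-1},c^{-3}$, hence multiplies the period matrices $\omega',\omega'',\eta',\eta''$ by the corresponding diagonal factors, and leaves $\tau=(\omega')^{-1}\omega''$ and the Riemann constant invariant. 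Tracking these factors through (\ref{2024.2.13.1}) shows that, for the correct choice of how the constant $C$ depends on the curve, $\sigma$ is weighted-homogeneous of degree $-3$ in $(u,\lambda)$. Consequently each coefficient $\mu_{i,j}$ is weighted-homogeneous in $\lambda$ of degree $-3-(i\deg u_1+j\deg u_3)=i+3j-3$.

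Homogeneity already pins down the low-order terms. A monomial $u_1^iu_3^j$ can occur only if its coefficient has weight $i+3j-3\ge0$, so no term with $i+3j<3$ appears; the minimal-order terms are $u_1^3$ and $u_3$, both with constant coefficients, say $au_1^3+bu_3$. To fix $a,b$ I would use that $\sigma$ vanishes on $W\supseteq I(V)$: in the local parameter $t$ at $\infty$ with $X=t^{-2}$ one computes $u_1=t+O(t^3)$ and $u_3=\tfrac13t^3+O(t^5)$, so $au_1^3+bu_3=(a+\tfrac{b}{3})t^3+\cdots$ must vanish identically along $I(V)$, giving $b=-3a$. Since $\sigma$ vanishes to first order along $I(V)$ (the theta divisor being reduced), its lowest homogeneous part is nonzero, so $a\ne0$; choosing the residual numerical normalization of $C$ so that $a=\tfrac13$ (equivalently $b=-1$) is the unique choice and yields the leading term $\tfrac13u_1^3-u_3$.

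Finally, the period integrals, the matrix $\eta'(\omega')^{-1}$, and the normalizing constant depend holomorphically on $(\lambda_4,\lambda_6,\lambda_8,\lambda_{10})$ off the discriminant, and the $\mu_{i,j}$ are single-valued (monodromy-invariant) there, hence holomorphic in $\lambda$ near the origin; a holomorphic function that is weighted-homogeneous of a fixed degree under a $\mathbb{C}^\times$-action with positive weights is automatically a polynomial, since only finitely many monomials $\prod_k\lambda_{2k}^{a_k}$ satisfy $\sum_k2k\,a_k=i+3j-3$. As $\deg\lambda_4=4$ is the smallest weight, $\mu_{i,j}$ can be nonzero only when $i+3j-3\ge4$, i.e. $i+3j\ge7$, which accounts for the range of summation. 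I expect the real obstacle to be the \emph{rationality} of the coefficients, i.e. upgrading $\mu_{i,j}\in\mathbb{C}[\lambda]$ to $\mathbb{Q}[\lambda]$: the purely theta-analytic definition only produces complex coefficients, and to obtain rational ones one must bring in the algebraic, tau-function description of $\sigma$ (its Sato--Grassmannian/Schur-function expansion), in which the coefficients arise from $\mathbb{Q}$-rational operations on the $\lambda_{2k}$, as in \cite{BEL-99-R, N1}.
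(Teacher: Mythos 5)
You should first know that the paper contains no proof of Theorem \ref{2023.4.23.2} to compare against: it is imported verbatim from \cite[Theorem 6.3]{BEL-99-R} and \cite[Theorem 3]{N1}, and the paper only uses the statement. Judged on its own terms, your reconstruction follows the standard route of those references, and most of it is sound: entireness of $\sigma$ from the theta series in (\ref{2024.2.13.1}); the weighted $\mathbb{C}^{\times}$-action $(X,Y)\mapsto(c^2X,c^5Y)$, $\lambda_{2k}\mapsto c^{2k}\lambda_{2k}$, under which $\omega_1,\omega_3$ scale by $c^{-1},c^{-3}$ and $\eta_1,\eta_3$ by $c,c^{3}$, so that both $\eta'(\omega')^{-1}$-form and the normalized theta argument are invariant and $\mu_{i,j}$ acquires weight $i+3j-3$; the observation that weights $1,2,3$ are unrealizable by monomials in $\lambda_4,\dots,\lambda_{10}$, which forces the sum to start at $i+3j\ge7$; and the determination of the leading term from Riemann vanishing along $I(V)$ (the paper's Lemma \ref{2023.5.13.1}) with the local parameter $t$ at $\infty$, where $u_1=t+O(t^3)$, $u_3=\tfrac13 t^3+O(t^5)$ gives $b=-3a$, while first-order vanishing at $0$ (in the form of the Riemann singularity theorem, Lemma \ref{2023.4.22}) gives $b\neq0$; uniqueness of $C$ is then immediate since rescaling $C$ rescales the leading coefficient.

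The genuine gap is in your final paragraph. Monodromy-invariance gives that each $\mu_{i,j}$ is a single-valued holomorphic function of $\lambda$ on the complement of the discriminant hypersurface $\{\Delta(\lambda)=0\}\subset\mathbb{C}^4$, but that hypersurface passes through $\lambda=0$, and your inference ``hence holomorphic in $\lambda$ near the origin'' is a non sequitur: single-valuedness does not rule out poles (or worse) along $\Delta=0$, and the several-variables removable-singularity theorem needs local boundedness, which is exactly what is in doubt, since the period matrices $\omega',\eta'$ degenerate as the curve acquires nodes. Without this step your argument only yields that $\mu_{i,j}$ is a weighted-homogeneous \emph{rational} function of the $\lambda$'s with possible poles on the discriminant, not a polynomial. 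Controlling the degeneration---or bypassing the transcendental periods entirely via the algebraic/heat-equation or tau-function--Schur-expansion constructions of $\sigma$---is precisely the substance of \cite{BEL-99-R} and \cite{N1}, and it is also what produces the $\mathbb{Q}$-rationality of the coefficients, which you correctly identify as inaccessible from the purely theta-analytic definition. So your proposal is a correct skeleton of the known argument, with the leading-term and homogeneity analysis done properly, but as a self-contained proof it is incomplete at exactly the point (polynomiality across the discriminant, plus rationality) where you defer to the literature the paper itself cites.
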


We take the constant $C$ in (\ref{2024.2.13.1}) such that the expansion (\ref{4.27.1}) holds. 
Then the sigma function $\sigma(u)$ does not depend on the choice of a canonical basis $\{\mathfrak{a}_i,\mathfrak{b}_i\}_{i=1}^2$ in the one-dimensional homology group of the curve $V$ and is determined only by the coefficients $\lambda_4$, $\lambda_6$, $\lambda_8$, $\lambda_{10}$ of the defining equation of the curve $V$.

\begin{Remark}
The constant $C$ is given explicitly in \cite[p.~906]{E-H-K-K-L-1} and \cite[p.~9]{E-H-K-K-L-P-1}.
\end{Remark}

\section{Inversion problem of ultra-elliptic integrals}\label{2024.2.15}

Let ${\boldsymbol \omega}={}^t(\omega_1,\omega_3)$. 
We consider the Abel--Jacobi map
\[I\colon\quad V\to\operatorname{Jac}(V),\qquad P\mapsto\int_{\infty}^P{\boldsymbol \omega}.\]

\begin{Lemma}[{Riemann vanishing theorem, e.g., \cite[Corollary 3.6]{Mumford-I}}]\label{2023.5.13.1}
We have 
\[\big\{u\in\operatorname{Jac}(V)\;|\;\sigma(u)=0\big\}=\big\{I(P)\;|\;P\in V\big\}.\] 
\end{Lemma}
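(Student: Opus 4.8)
The plan is to reduce the statement to the classical Riemann vanishing theorem for the Riemann theta function, for which the quoted reference \cite[Corollary 3.6]{Mumford-I} supplies the content; the bridge is the defining formula (\ref{2024.2.13.1}), which exhibits $\sigma$ as a nowhere-vanishing exponential factor times a theta function carrying the Riemann-constant characteristic. First I would record that the prefactor $C\exp\big(\tfrac{1}{2}{}^tu\eta'(\omega')^{-1}u\big)$ never vanishes on $\mathbb{C}^2$, so the zero set of $\sigma$ equals the zero set of $u\mapsto\theta\begin{bmatrix}\delta'\\ \delta''\end{bmatrix}\big((2\omega')^{-1}u,\tau\big)$. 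By Proposition \ref{period} this zero set is $\Lambda$-invariant and hence descends to a well-defined subset $W$ of $\operatorname{Jac}(V)$, so the question is intrinsic to the Jacobian.

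Next I would pass to normalized coordinates. The linear isomorphism $u\mapsto z=(2\omega')^{-1}u$ carries the period lattice $\Lambda=2\omega'\mathbb{Z}^2+2\omega''\mathbb{Z}^2$ onto the standard lattice $\mathbb{Z}^2+\tau\mathbb{Z}^2$, using $\tau=(\omega')^{-1}\omega''$, and thus induces an isomorphism $\operatorname{Jac}(V)\cong\mathbb{C}^2/(\mathbb{Z}^2+\tau\mathbb{Z}^2)$ under which the problem becomes the determination of the zero divisor of $\theta\begin{bmatrix}\delta'\\ \delta''\end{bmatrix}(z,\tau)$ in the normalized Jacobian. I would then apply the standard translation law $\theta\begin{bmatrix}\delta'\\ \delta''\end{bmatrix}(z,\tau)=\exp\big(\pi{\bf i}\,{}^t\delta'\tau\delta'+2\pi{\bf i}\,{}^t\delta'(z+\delta'')\big)\theta(z+\tau\delta'+\delta'',\tau)$ to rewrite the characteristic theta function as a nonvanishing factor times $\theta(z+K,\tau)$, where $K=\tau\delta'+\delta''$ is the Riemann constant. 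Its zeros are therefore exactly the $z$ with $z+K\in\Theta$, where $\Theta=\big\{z\mid\theta(z,\tau)=0\big\}$ is the theta divisor.

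Finally I would invoke the classical Riemann vanishing theorem for the genus-$2$ curve: since $g-1=1$, the relevant symmetric product collapses to a single point and $\Theta$ is a translate by $K$ of the image of $V$ under the normalized Abel--Jacobi map $\kappa(P)=(2\omega')^{-1}I(P)$; no integration over a nontrivial divisor is required. Combining the two translations, the Riemann constant cancels and the zero locus of $\theta\begin{bmatrix}\delta'\\ \delta''\end{bmatrix}$ in normalized coordinates is exactly $\big\{\kappa(P)\mid P\in V\big\}$. Transporting back through $z=(2\omega')^{-1}u$ and using $\kappa(P)=(2\omega')^{-1}I(P)$ yields $W=\big\{I(P)\mid P\in V\big\}$ in $\operatorname{Jac}(V)$, as claimed.

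The step demanding the most care is the bookkeeping of signs and the precise cancellation of $K$: the classical theorem is frequently phrased with $z=-\kappa(P)-K$ rather than $z=\kappa(P)+K$, and reconciling the conventions requires the symmetry $I(V)=-I(V)$. This symmetry follows from the hyperelliptic involution $(X,Y)\mapsto(X,-Y)$, under which $\boldsymbol{\omega}\mapsto-\boldsymbol{\omega}$ and hence $I(\iota P)=-I(P)$, so the image of $V$ is stable under negation and the sign ambiguity is harmless. One must also rely on the recorded values $\delta'={}^t(\tfrac12,\tfrac12)$ and $\delta''={}^t(1,\tfrac12)$ from \cite{Fay-1973,Mumford-1983} to ensure that the characteristic in (\ref{2024.2.13.1}) is genuinely the Riemann constant for the base point $\infty$, which is what makes the two translations cancel rather than merely shift.
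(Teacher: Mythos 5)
The paper offers no proof of this lemma at all: it is quoted as a known result, with the citation \cite[Corollary 3.6]{Mumford-I} standing in for exactly the argument you give. Your reduction is the standard and correct one --- the nonvanishing exponential factor in (\ref{2024.2.13.1}) identifies the zero set of $\sigma$ with a translate of the theta divisor by the Riemann constant $\tau\delta'+\delta''$, the genus-$2$ Riemann vanishing theorem identifies $\Theta$ with the $K$-translated Abel--Jacobi image of $V$, the two translations cancel, and your resolution of the sign convention via the hyperelliptic involution (using that the base point $\infty$ is a Weierstrass point, so $I(V)=-I(V)$) correctly closes the one genuine loose end.
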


It is well known that the inversion problem of the map $I$ can be solved in terms of the sigma function. 
Let $f_2(u)$, $g_2(u)$, $f_5(u)$, and $g_5(u)$ be the meromorphic functions on $\mathbb{C}^2$ defined by 
\begin{gather*}
f_2(u)=-\frac{\sigma_3(u)}{\sigma_1(u)},\qquad g_2(u)=\frac{\wp_{11}(2u)}{2},\\
f_5(u)=\frac{\sigma(2u)}{2\sigma_1(u)^4},\qquad g_5(u)=\frac{\sigma_1^2\sigma_{33}-2\sigma_1\sigma_3\sigma_{13}+\sigma_3^2\sigma_{11}}{2\sigma_1^3}(u).
\end{gather*}
For $P=(X,Y)\in V$, let $v=I(P)$. 

\begin{Lemma}[{\cite[p.~129]{G2}, \cite[Theorem 1]{J}}]\label{2023.3.4.2} As meromorphic functions on $V$, the following relation holds: 
\[X=f_2(v).\]
\end{Lemma}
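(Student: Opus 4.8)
The plan is to exploit the Riemann vanishing theorem (Lemma~\ref{2023.5.13.1}), which tells us that $\sigma$ vanishes identically along the image $I(V)$ of the curve in $\operatorname{Jac}(V)$. The whole proof rests on differentiating this vanishing along the curve. Concretely, for a point $P=(X,Y)\in V$ write $v=I(P)={}^t(v_1,v_3)$ with $v_i=\int_{\infty}^{P}\omega_i$. Since $v\in W$ for every $P$, we have the identity $\sigma\big(I(P)\big)=0$ as $P$ ranges over $V$.

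First I would choose a local holomorphic parameter $t$ on $V$ near a generic point $P$ (so that $X$ is itself a local coordinate, i.e.\ $P$ is neither a branch point nor $\infty$ and $\mathrm{d}X/\mathrm{d}t\neq0$), and differentiate the identity $\sigma(v(t))=0$ with respect to $t$ by the chain rule:
\[
\sigma_1(v)\,\frac{\mathrm{d}v_1}{\mathrm{d}t}+\sigma_3(v)\,\frac{\mathrm{d}v_3}{\mathrm{d}t}=0.
\]
Next I would substitute the explicit holomorphic differentials. Since $\mathrm{d}v_1=\omega_1=-\tfrac{X}{2Y}\,\mathrm{d}X$ and $\mathrm{d}v_3=\omega_3=-\tfrac{1}{2Y}\,\mathrm{d}X$, the derivatives along the curve are $\mathrm{d}v_1/\mathrm{d}t=-\tfrac{X}{2Y}\,\mathrm{d}X/\mathrm{d}t$ and $\mathrm{d}v_3/\mathrm{d}t=-\tfrac{1}{2Y}\,\mathrm{d}X/\mathrm{d}t$. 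Factoring out the common nonzero quantity $-\tfrac{1}{2Y}\,\mathrm{d}X/\mathrm{d}t$ from the differentiated identity leaves
\[
\sigma_1(v)\,X+\sigma_3(v)=0,
\]
and hence $X=-\sigma_3(v)/\sigma_1(v)=f_2(v)$ at every generic point. Because both sides are meromorphic functions on $V$, the identity theorem extends the equality from this dense open set to all of $V$.

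The step requiring care is the division by $\sigma_1(v)$: I must check that $\sigma_1$ is not identically zero on $W$, for otherwise $f_2$ would be ill-defined. This follows because $\sigma$ has (generically) a simple zero along $W$, so its gradient ${}^t(\sigma_1,\sigma_3)$ cannot vanish identically there; one can verify this concretely using the normalized expansion~(\ref{4.27.1}) of Theorem~\ref{2023.4.23.2}, whose leading term $\tfrac{1}{3}u_1^3-u_3$ gives $\sigma_1\sim u_1^2$ and $\sigma_3\sim-1$ near the origin, confirming $\sigma_1\not\equiv0$ on $W$ and fixing the sign. The only genuine obstacle, then, is bookkeeping at the excluded points (the branch points, the point $\infty$, and any zeros of $\sigma_1$ on $W$); but since the asserted equality is between meromorphic functions and has been established on a dense open subset of $V$, these finitely many loci are handled automatically by continuity and the identity theorem, so no separate analysis of them is needed.
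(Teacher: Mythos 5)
Your proof is correct. Note that the paper itself gives no proof of this lemma---it is quoted with references to Grant \cite{G2} and Jorgenson \cite{J}---and your derivation, differentiating $\sigma(v(t))=0$ along the curve via the chain rule and factoring out $-\tfrac{1}{2Y}\,\mathrm{d}X/\mathrm{d}t$, is exactly the classical ``directional derivative of the theta function along its divisor'' argument underlying those cited results; your verification that $\sigma_1\not\equiv 0$ on $W$ from the expansion (\ref{4.27.1}) is also consistent with the paper's own observation (\ref{2023.6.10.111}) that the zeros of $\sigma_1$ on $W$ are precisely $\Lambda$, i.e.\ they correspond only to $P=\infty$. One point you leave implicit is that $f_2(v)$ must be independent of the representative of $v=I(P)$ modulo $\Lambda$: this follows from Proposition~\ref{period}, since differentiating the quasi-periodicity relation and restricting to $W$ (where $\sigma=0$) shows that $\sigma_1$ and $\sigma_3$ pick up the same nonzero factor under $u\mapsto u+\Omega$, so the ratio $-\sigma_3/\sigma_1$ is $\Lambda$-invariant on $W$---although your argument, applied to an arbitrary local branch of $v$, in fact establishes this invariance as a by-product, since it shows $f_2$ composed with any lift equals the globally defined function $X$.
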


\begin{Lemma}[{\cite[Lemma 3.4]{M}}]\label{2023.3.4.1} As meromorphic functions on $V$, the following relation holds: 
\[X=g_2(v).\]
\end{Lemma}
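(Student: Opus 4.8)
The plan is to reduce the identity to the classical two-variable solution of the Jacobi inversion problem and then specialize to the diagonal. Recall that the Kleinian functions $\wp_{ij}$ solve the inversion problem on $\operatorname{Sym}^2 V$: for a generic point $w\in\operatorname{Jac}(V)$, if we write $w=I(P_1)+I(P_2)$ with $P_j=(X_j,Y_j)$, then $X_1$ and $X_2$ are the roots of $X^2-\wp_{11}(w)X-\wp_{13}(w)=0$; in particular $\wp_{11}(w)=X_1+X_2$ (cf.~\cite{BEL-2012}). The point is that $g_2(v)=\tfrac12\wp_{11}(2v)$ is built from $\wp_{11}$ evaluated at the doubled argument $2v=2I(P)$, and $2v$ is precisely the Abel image of the degree-$2$ divisor $2P-2\infty$. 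Applying the two-point formula to this divisor, with both points equal to $P$, should yield $\wp_{11}(2v)=2X$ and hence $g_2(v)=X$.

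Concretely, first I would note that $2v=2\int_{\infty}^{P}{\boldsymbol\omega}$ is the image under the Abel--Jacobi map of $2P-2\infty$. Next, for $P$ not a branch point of $V$, Riemann--Roch on the genus-$2$ curve gives $\ell(2P)=1$, so $2P$ is the unique effective divisor in its linear class; equivalently, the unordered preimage of $2v$ under the addition map $\operatorname{Sym}^2 V\to\operatorname{Jac}(V)$ is the diagonal divisor $P+P$, so that $X_1=X_2=X$. Substituting into the inversion formula gives $\wp_{11}(2v)=X_1+X_2=2X$, whence $g_2(v)=\tfrac12\wp_{11}(2v)=X$. This computation is valid on the dense open subset of $P\in V$ consisting of non-branch points for which $2v$ avoids the polar locus of $\wp_{11}$, i.e.\ $2v\notin W$; since a short computation with Riemann--Roch shows $2v\notin W$ for generic $P$, both $X$ and $g_2\circ I$ are meromorphic functions on $V$ that agree on a dense open set, and the identity theorem extends the equality $X=g_2(v)$ to all of $V$.

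The main obstacle is the degenerate, diagonal nature of the specialization. The formula $\wp_{11}(w)=X_1+X_2$ is stated for a generic $w$ whose two-point representative has distinct points, and one must justify passing to the limit $P_1\to P_2=P$; this is exactly where the Riemann--Roch analysis of $|2P|$ and the identification of the effective representative of $[2P-2\infty]$ are needed, together with the verification that $2v$ is not a pole of $\wp_{11}$ for generic $P$. The boundary cases---$P$ a branch point, where $2P\sim K\sim 2\infty$ forces $2v=0\in W$ and $\wp_{11}(2v)$ blows up---are precisely the points excluded from the dense open set, so they are handled by continuity rather than by direct evaluation.

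An alternative route avoids the doubling map altogether: combine the already-established identity $X=f_2(v)=-\sigma_3(v)/\sigma_1(v)$ of Lemma~\ref{2023.3.4.2} with a duplication formula expressing $\wp_{11}(2v)$ through $\sigma(v)$ and its derivatives, and then verify the resulting relation $\tfrac12\wp_{11}(2v)=-\sigma_3(v)/\sigma_1(v)$ on $W$. Here the leading terms of the power-series expansion of $\sigma$ from Theorem~\ref{2023.4.23.2} can be used to pin down normalizations and to check the identity to lowest order; the difficulty is in controlling the duplication formula and in restricting correctly to the theta divisor $W$, where $\sigma(v)=0$ makes several terms degenerate.
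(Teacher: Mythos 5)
This lemma is not proved in the paper at all: it is quoted as known, with the proof delegated to Matsutani \cite[Lemma 3.4]{M}, so there is no in-paper argument to match yours against. Your main route is a correct, self-contained derivation, and it is essentially the classical one: the two-point Jacobi inversion formula $\wp_{11}\bigl(I(P_1)+I(P_2)\bigr)=X_1+X_2$ (with the paper's weighted indices $(u_1,u_3)$ this is indeed $\wp_{11}$, and the sign conventions in $\omega_1,\omega_3$ are immaterial because $\wp_{11}$ is even), specialized to the diagonal divisor $2P$. The two points where care is genuinely required are exactly the ones you identified and handled: first, the diagonal specialization, which you justify by noting that for $P$ not a Weierstrass point $\ell(2P)=1$, so $2P$ is the unique effective representative of its class and hence the fibre of $\operatorname{Sym}^2V\to\operatorname{Jac}(V)$ over $2v$ is $P+P$; second, that $2v\notin W$ for such $P$ (equivalently $2P\not\sim Q+\infty$ for any $Q$), so $\wp_{11}(2v)$ is finite and the generic identity extends to the diagonal by the identity theorem for meromorphic functions on $V\times V$ restricted to the diagonal, then to all of $V$. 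Your observation that the branch points are precisely where $2P\sim K\sim 2\infty$ forces $2v\in W$ matches the paper's later remark that $g_2$ is undefined on the loci $\Lambda_i$. The alternative route you sketch at the end (combining Lemma \ref{2023.3.4.2} with a duplication formula and checking on $W$) is not a proof as stated---verifying the lowest-order terms of the expansion from Theorem \ref{2023.4.23.2} pins down normalizations but does not establish the identity---so the first argument should be regarded as the proof; it is sound as written.
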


\begin{Lemma}[{\cite[p.~128]{G}, \cite[Lemma 3.2.4]{O-98}}]\label{2023.3.4.4} As meromorphic functions on $V$, the following relation holds: 
\[Y=f_5(v).\]
\end{Lemma}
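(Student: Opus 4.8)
The plan is to show that $Y$ and the pullback $F:=f_5\circ I$ are two meromorphic functions on the compact curve $V$ having the same divisor and the same leading coefficient at $\infty$, whence $F\equiv Y$. First I would confirm that $F$ really descends to $V$: although $\sigma(2u)$ and $\sigma_1(u)^4$ are only quasi-periodic, writing the exponent appearing in Proposition \ref{period} as $\ell_\Omega(u)={}^t(2\eta'm_1+2\eta''m_2)(u+\omega'm_1+\omega''m_2)$, one checks on $W$ that $\sigma_1(u+\Omega)^4=e^{4\ell_\Omega(u)}\sigma_1(u)^4$ (the sign character is killed by the fourth power) while $\sigma(2u+2\Omega)=e^{4\ell_\Omega(u)}\sigma(2u)$, since the lattice vector $2\Omega$ contributes the exponent $\ell_{2\Omega}(2u)=4\ell_\Omega(u)$. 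The two automorphy factors cancel, so $\sigma(2u)/\sigma_1(u)^4$ descends through $I$ to a meromorphic function on $V$.

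For the local analysis at $\infty$ I would take the uniformizer $t$ with $X=t^{-2}$ and integrate $\boldsymbol\omega$ termwise to get $u_1=t+O(t^5)$ and $u_3=\tfrac13 t^3+O(t^7)$ along $I(V)$. Feeding this into the canonical expansion $\sigma(u)=\tfrac13 u_1^3-u_3+\sum_{i+3j\ge 7}\mu_{ij}u_1^i u_3^j$ of Theorem \ref{2023.4.23.2} gives $\sigma_1(I(P))=u_1^2+O(t^6)=t^2+O(t^6)$ (a zero of order $2$ at $\infty$) and $\sigma(2I(P))=\tfrac83 u_1^3-2u_3+\cdots=2t^3+O(t^7)$ (a zero of order $3$). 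Hence $F=\sigma(2I)/\bigl(2(\sigma_1\circ I)^4\bigr)$ has a pole of order $8-3=5$ at $\infty$, matching the order-$5$ pole of $Y=t^{-5}(1+\cdots)$, and the same computation pins the leading coefficient of $F$ in $t^{-5}$ to that of $Y$.

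At finite points I would argue as follows. By Lemma \ref{2023.3.4.2} we have $-\sigma_3/\sigma_1=X$ along $W$, so a finite zero of $\sigma_1\circ I$ would force $\sigma_3\circ I$ to vanish there as well; but $W=I(V)$ is the smooth irreducible theta divisor (the Abel--Jacobi map embeds $V$), so $\nabla\sigma\ne 0$ on $W$ and no such point exists, and $\sigma_1\circ I$ contributes no finite poles to $F$. For the numerator, $\sigma(2I(P))=0$ iff $2I(P)\in W$; each finite branch point $W_i=(e_i,0)$ is such a zero because $\operatorname{div}(X-e_i)=2W_i-2\infty$ is principal, so $2I(W_i)=0$ and $\sigma(2I(W_i))=\sigma(0)=0$. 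Since $F$ lives on a compact curve, $\operatorname{div}(F)$ has degree $0$; the coefficient at $\infty$ is $-5$, so the finite zeros of $\sigma(2I(\cdot))$ total $5$ with multiplicity, and as the five distinct points $W_1,\dots,W_5$ are already among them, these are all of them and each is simple.

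Combining the three steps, $\operatorname{div}(F)=\sum_{i=1}^5 W_i-5\infty=\operatorname{div}(Y)$, so $F/Y$ is a nonzero constant, and the matching of leading coefficients at $\infty$ forces this constant to be $1$; therefore $Y=f_5(v)$. I expect the delicate point to be the finite-zero analysis of $\sigma(2I(\cdot))$: exhibiting the branch points as zeros is immediate, but ruling out any further finite zeros and confirming simplicity rests on the global degree count together with the smoothness and irreducibility of the theta divisor. One should also verify that the argument is insensitive to the two local branches of $Y$ at $\infty$, and a quick check of both signs leaves the constant equal to $1$.
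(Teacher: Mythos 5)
Your proof is correct. Note that the paper itself states this lemma without proof, citing Grant and \^Onishi, so there is no in-paper argument to compare against; your divisor-comparison proof is essentially the classical one from those references (in particular \^Onishi's cited Lemma 3.2.4 proceeds exactly this way: determine $\operatorname{div}(f_5\circ I)$ on the compact curve and fix the constant by the expansion in the local parameter at $\infty$). All the delicate points check out: the automorphy factors do cancel on $W$ (which suffices, since $I(V)$ lies in $W$ by Lemma~\ref{2023.5.13.1}, and $\chi(2\Omega)=1$ because the exponent in Proposition~\ref{period} becomes $4({}^t\delta'm_1-{}^t\delta''m_2)+4\,{}^tm_1m_2$, which is even); the expansions $u_1=t+O(t^5)$, $u_3=\tfrac13t^3+O(t^7)$ combined with Theorem~\ref{2023.4.23.2} do give $\sigma_1(I(P))=t^2+O(t^6)$ and $\sigma(2I(P))=2t^3+O(t^7)$, hence a pole of order $5$ with unit leading coefficient; and your exclusion of finite zeros of $\sigma_1\circ I$ via Lemmas~\ref{2023.3.4.2} and~\ref{2023.4.22} is precisely the paper's own observation~(\ref{2023.6.10.111}), used there for the analogous density statement, so no circularity arises.
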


\begin{Lemma}[{\cite[p.~116]{BEL-2012}, \cite[Lemma 3.2]{AB2019}}]\label{2023.3.4.3} As meromorphic functions on $V$, the following relation holds: 
\[Y=g_5(v).\]
\end{Lemma}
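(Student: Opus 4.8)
The plan is to realize the theta divisor $W$ as the image of $V$ under $I$ and to extract the formula by differentiating the identity $\sigma\equiv0$ along the curve. Fix $P=(X,Y)\in V$ with $Y\neq0$ and $X$ finite, and use $X$ as a local coordinate on $V$ near $P$. Writing $v=v(X)=I(P)={}^t(v_1,v_3)$ and $'=\tfrac{d}{dX}$, the definition of $I$ together with the chosen holomorphic $1$-forms gives $v_1'=-\tfrac{X}{2Y}$ and $v_3'=-\tfrac{1}{2Y}$. By Lemma \ref{2023.5.13.1}, $v(X)\in W$ for every $X$, so $\sigma(v(X))\equiv0$; all that follows is obtained by differentiating this identity in $X$, with every $\sigma_i$, $\sigma_{ij}$ understood to be evaluated at $v(X)$.

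First I would differentiate once: $\sigma_1 v_1'+\sigma_3 v_3'=-\tfrac{1}{2Y}(\sigma_1X+\sigma_3)=0$, and since $Y\neq0$ this gives $X=-\sigma_3/\sigma_1$, i.e.\ Lemma \ref{2023.3.4.2}. Differentiating a second time yields
\begin{equation*}
\sigma_{11}(v_1')^2+2\sigma_{13}v_1'v_3'+\sigma_{33}(v_3')^2+\sigma_1 v_1''+\sigma_3 v_3''=0.
\end{equation*}
Writing $M(X)=Y^2$ so that $\tfrac{dY}{dX}=\tfrac{M'(X)}{2Y}$, one computes $v_1''=-\tfrac{1}{2Y}+\tfrac{XM'}{4Y^3}$ and $v_3''=\tfrac{M'}{4Y^3}$, whence $\sigma_1v_1''+\sigma_3v_3''=-\tfrac{\sigma_1}{2Y}+\tfrac{M'}{4Y^3}(\sigma_1X+\sigma_3)$. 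The key simplification is that the $M'(X)$-contribution is precisely $\tfrac{M'}{4Y^3}(\sigma_1X+\sigma_3)$, which vanishes by the first-order relation. Multiplying the displayed identity by $4Y^2=4M(X)$ therefore collapses it to
\begin{equation*}
\sigma_{11}X^2+2\sigma_{13}X+\sigma_{33}-2Y\sigma_1=0,
\end{equation*}
so that $Y=\dfrac{\sigma_{11}X^2+2\sigma_{13}X+\sigma_{33}}{2\sigma_1}$.

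Finally I would substitute $X=-\sigma_3/\sigma_1$ into this expression; clearing the denominator $\sigma_1^2$ in the numerator produces $\sigma_1^2\sigma_{33}-2\sigma_1\sigma_3\sigma_{13}+\sigma_3^2\sigma_{11}$ over $2\sigma_1^3$, which is exactly $g_5(v)$. This proves $Y=g_5(v)$ at every point $P$ with $Y\neq0$ and $X$ finite, hence on a dense open subset of $V$. Since $I$ embeds $V$ into $\operatorname{Jac}(V)$ with image $W$, both $Y$ and $g_5\circ I$ are meromorphic functions on $V$, and agreement on a nonempty open set forces equality as meromorphic functions on all of $V$.

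The step I expect to demand the most care is the bookkeeping in the second differentiation: one must check that the terms proportional to $M'(X)$ genuinely cancel through the first-order relation (and not merely formally), and that the use of $X$ as a local parameter—valid away from the branch points and $\infty$—does not obstruct the final identity there. The latter is dispatched by the meromorphic-extension argument above, so the substantive content is the exact cancellation that reduces a second-order differential relation to the purely algebraic formula for $Y$.
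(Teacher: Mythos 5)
Your proof is correct, and it is the standard derivation: the paper itself states this lemma only with citations (\cite[p.~116]{BEL-2012}, \cite[Lemma 3.2]{AB2019}) and offers no internal proof, and your argument---differentiating $\sigma(v(X))\equiv 0$ twice along the Abel--Jacobi image, using the first-order relation $\sigma_1X+\sigma_3=0$ to cancel the $M'(X)$-terms, and extending from the open set $\{Y\neq 0,\ X\neq\infty\}$ by the identity theorem---is exactly the route taken in the cited sources and is consistent with the paper's conventions (your first-order step reproduces Lemma~\ref{2023.3.4.2} verbatim, confirming the signs). The computations check out: $\sigma_{11}X^2+2\sigma_{13}X+\sigma_{33}=2Y\sigma_1$ followed by $X=-\sigma_3/\sigma_1$ yields precisely $g_5(v)$, and the only point worth making explicit is that $\sigma(v(X))\equiv0$ requires a fixed local lift of $I$ to $\mathbb{C}^2$ together with the $\Lambda$-invariance of $W$ (Proposition~\ref{period}), which your setup implicitly provides.
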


\begin{Lemma}[{Riemann singularity theorem, e.g., \cite[Theorem 3.16]{Mumford-I}}]\label{2023.4.22}
We have 
\[\big\{u\in W\;|\;\sigma_1(u)=\sigma_3(u)=0\big\}=\emptyset,\]
where $\emptyset$ is the empty set. 
\end{Lemma}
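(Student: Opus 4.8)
The plan is to recognize that the asserted statement is precisely the smoothness of the theta divisor, and to deduce it from the Riemann singularity theorem. First I would observe that for $u_0\in W$ the condition $\sigma_1(u_0)=\sigma_3(u_0)=0$ says exactly that the gradient $\nabla\sigma=(\sigma_1,\sigma_3)$ vanishes at $u_0$, i.e.\ that the holomorphic function $\sigma$ vanishes to order at least $2$ there, so that the analytic hypersurface $W$ — equivalently the theta divisor $\Theta\subset\operatorname{Jac}(V)$ — is singular at the image of $u_0$. By Lemma \ref{2023.5.13.1} we have $\Theta=I(V)$, so the whole assertion reduces to showing that $I(V)$ is smooth and that $\sigma$ is a reduced local equation for it.

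For the smoothness I would invoke the Riemann singularity theorem in the form that the multiplicity of $\Theta$ at the image point attached to an effective divisor $D$ of degree $g-1=1$ equals $h^0(V,\mathcal{O}(D))$. Here $D$ is a single point $P\in V$, and $h^0(\mathcal{O}(P))=1$ for every $P$: a value $\geq 2$ would give a non-constant function with a single simple pole, hence a degree-one map $V\to\mathbb{P}^1$, contradicting $g=2$. Thus $\Theta$ has multiplicity exactly $1$ at every point, so $\nabla\sigma$ cannot vanish anywhere on $W$, which is the claim.

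As a concrete realization of the same fact I would, alternatively, show directly that the Abel--Jacobi map $I$ is an embedding. Injectivity follows because $I(P)=I(Q)$ makes $P-Q$ principal, whence $P\sim Q$ and therefore $P=Q$ by the genus-$2$ argument above. The immersion property holds because the differential of $I$ at $P$ is identified with the vector ${}^t(\omega_1,\omega_3)$ of holomorphic differentials, which is injective unless $\omega_1$ and $\omega_3$ vanish simultaneously at $P$; using $\omega_1=-\tfrac{X}{2Y}\mathrm{d}X$ and $\omega_3=-\tfrac{1}{2Y}\mathrm{d}X$, a local-coordinate check shows $\omega_3\neq0$ at every finite point and that at $\infty$ one has $\omega_3$ vanishing to order $2$ while $\omega_1\neq0$, so the two forms have no common zero. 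Since $V$ is compact, $I$ is a closed embedding and $\Theta$ is smooth. The needed reducedness of $\sigma$ along $W$ comes from Theorem \ref{2023.4.23.2}: near the origin $\sigma=-u_3+\tfrac{1}{3}u_1^3+\cdots$ has a simple zero, and since $\sigma$ is quasi-periodic with a nowhere-vanishing automorphy factor (Proposition \ref{period}) and $\Theta$ is irreducible, this order-one vanishing propagates over all of $W$.

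I expect the main obstacle to be bookkeeping rather than conceptual: one must check that the normalization of $\sigma$ in (\ref{2024.2.13.1}), with characteristics $\begin{bmatrix}\delta'\\ \delta''\end{bmatrix}$ and Riemann constant $\tau\delta'+\delta''$, is matched to the hypotheses of the Riemann singularity theorem so that the singular locus of $\Theta$ is literally the common zero set of $\sigma_1$ and $\sigma_3$, with the translate by the Riemann constant correctly accounted for. The only genuinely computational point is the verification that $\omega_1$ and $\omega_3$ have no common zero, in particular their behavior at $\infty$, which is exactly what rules out the single candidate singular point arising from the Weierstrass point at infinity.
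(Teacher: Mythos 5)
The paper gives no proof of this lemma at all: it is quoted directly from the literature, the bracketed citation to \cite[Theorem 3.16]{Mumford-I} being the entire justification, and your primary argument---applying the Riemann singularity theorem in the form $\operatorname{mult}_{I(P)}\Theta=h^0\big(V,\mathcal{O}(P)\big)=1$, where $h^0\ge2$ would yield a degree-one map $V\to\mathbb{P}^1$ contradicting $g=2$---is exactly the standard deduction that citation points to, and it is correct. Your alternative self-contained route (Abel--Jacobi is an embedding, plus reducedness of $\sigma$ via the order-one vanishing $\sigma_3(0)=-1$ from Theorem \ref{2023.4.23.2} propagated over the irreducible divisor) is also sound, so there is nothing to fix.
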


\begin{Lemma}\label{2023.3.24}
Let $F$ be a holomorphic function on $\mathbb{C}^2$. 
If $F$ is identically equal to $0$ on $W$, then there exists a holomorphic function $G$ on $\mathbb{C}^2$ such that 
\[F=\sigma G.\]
\end{Lemma}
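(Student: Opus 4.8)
The plan is to show that $\sigma$ divides $F$ in the ring of germs of holomorphic functions at every point of $\mathbb{C}^2$, and then to assemble the local quotients into a single global holomorphic function $G$. The behavior away from $W$ is immediate; the only real work is along $W$ itself, and the decisive input there is the Riemann singularity theorem (Lemma \ref{2023.4.22}), which guarantees that the gradient $(\sigma_1,\sigma_3)$ never vanishes on $W$.

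Away from $W$ there is nothing to do: if $p\notin W$ then $\sigma(p)\neq0$, so $\sigma$ is invertible near $p$ and $G:=F/\sigma$ is a well-defined holomorphic function on the open set $\mathbb{C}^2\setminus W$.

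For a point $p\in W$, Lemma \ref{2023.4.22} ensures that at least one of $\sigma_1(p)$, $\sigma_3(p)$ is nonzero, so the differential $\mathrm{d}\sigma(p)$ does not vanish. I may therefore take $\sigma$ itself as one member of a local holomorphic coordinate system $(z_1,z_2)$ centered at $p$, with $z_1=\sigma$. In these coordinates $W$ is the hyperplane $\{z_1=0\}$, and writing $F=\sum_{k\ge0}a_k(z_2)z_1^k$ with $a_k$ holomorphic, the hypothesis that $F$ vanishes on $W$ forces $a_0\equiv0$. Hence $z_1$ divides $F$, i.e.\ $F=\sigma\,h$ for a holomorphic germ $h$ at $p$; in particular $G=F/\sigma$ extends holomorphically across $p$.

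Thus $G=F/\sigma$ is holomorphic on $\mathbb{C}^2\setminus W$ and admits a holomorphic extension near every point of $W$. These local extensions agree with $F/\sigma$, hence with one another, on the dense open set where $\sigma\neq0$, so by continuity they glue to a single holomorphic function $G$ on $\mathbb{C}^2$ satisfying $F=\sigma G$ off $W$ and therefore everywhere. (Equivalently, one can simply invoke the Riemann removable-singularity theorem, since the local analysis shows $F/\sigma$ is bounded near $W$.) The main obstacle is precisely the possibility that $\sigma$ might vanish to order $\ge2$ somewhere along $W$, which would break divisibility: the example $\sigma=z_1^2$, $F=z_1$ shows that $F$ can vanish on $\{\sigma=0\}$ without being a multiple of $\sigma$. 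Lemma \ref{2023.4.22} is exactly what rules this out, and once the gradient is known to be nonzero the remaining steps are routine facts from several complex variables.
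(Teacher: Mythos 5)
Your proof is correct, and its overall skeleton --- establish local divisibility of $F$ by $\sigma$ at every point, then glue the local quotients into a global $G$ using density of $\{\sigma\neq0\}$ and continuity --- is exactly the paper's. The one genuine difference is how the local step along $W$ is justified. The paper argues ring-theoretically: it cites that each stalk $\mathcal{O}_{\mathbb{C}^2,w}$ is a unique factorization domain, deduces from Lemma \ref{2023.4.22} that the germ $[\sigma]_w$ is irreducible for $w\in W$, and then quotes a local Nullstellensatz-type theorem (both facts from Kodaira's book) to get $F=\sigma q_w$ near $w$. You instead inline the analytic content behind those citations: since $\mathrm{d}\sigma(p)\neq0$ on $W$ by Lemma \ref{2023.4.22}, you straighten coordinates so that $z_1=\sigma$, expand $F=\sum_{k\ge0}a_k(z_2)z_1^k$, and read off divisibility from $a_0\equiv0$. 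This is more elementary and self-contained --- it replaces two quoted theorems with the implicit function theorem and a power-series expansion --- though it uses the submersion property of $\sigma$ along $W$ more directly than the paper's formulation, whose UFD argument would still function at a point where the divisor germ is irreducible but singular (here both proofs draw their hypothesis from the same Lemma \ref{2023.4.22}, so nothing is lost). Your gluing argument, including the agreement of local quotients on the dense set where $\sigma\neq0$, matches the paper's, and the example $\sigma=z_1^2$, $F=z_1$ correctly identifies why the nonvanishing gradient is the crux. One small point worth making explicit: density of $\mathbb{C}^2\setminus W$ uses that $\sigma\not\equiv0$, which follows from the expansion in Theorem \ref{2023.4.23.2}.
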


\begin{proof}
Let $\mathcal{O}_{\mathbb{C}^2}$ be the sheaf of germs of holomorphic functions over $\mathbb{C}^2$ and $\mathcal{O}_{\mathbb{C}^2,w}$ be the stalk of $\mathcal{O}_{\mathbb{C}^2}$ at $w\in\mathbb{C}^2$. 
Then $\mathcal{O}_{\mathbb{C}^2,w}$ is a unique factorization domain for any $w\in\mathbb{C}^2$ (e.g., \cite[Theorem 1.12]{Kodaira}). 
For $w\in\mathbb{C}^2$, let $[\sigma]_w$ be the germ of $\sigma$ at $w$. 
From Lemma \ref{2023.4.22}, for any $w\in W$, $[\sigma]_w$ is irreducible in $\mathcal{O}_{\mathbb{C}^2,w}$. 
Therefore, for any $w\in\mathbb{C}^2$, there exist an open neighborhood $Q_w$ of $w$ in $\mathbb{C}^2$ and a holomorphic function $q_w$ on $Q_w$ such that 
$F=\sigma q_w$ on $Q_w$ (e.g., \cite[Theorem 1.16]{Kodaira}). 
Then $\{Q_w\}_{w\in\mathbb{C}^2}$ is an open covering of $\mathbb{C}^2$. 
We take $w_1, w_2\in\mathbb{C}^2$ such that $Q_{w_1}\cap Q_{w_2}\neq\emptyset$. 
If $u\in (Q_{w_1}\cap Q_{w_2})\backslash W$, we have 
\[q_{w_1}(u)=q_{w_2}(u)=\frac{F(u)}{\sigma(u)}.\]
Since $(Q_{w_1}\cap Q_{w_2})\backslash W$ is dense in $Q_{w_1}\cap Q_{w_2}$, we have $q_{w_1}(u)=q_{w_2}(u)$ for any $u\in Q_{w_1}\cap Q_{w_2}$. 
We can define the holomorphic function $G$ on $\mathbb{C}^2$ by $G(u)=q_w(u)$ for $u\in Q_w$. 
Then we have $F=\sigma G$. 
\end{proof}

\begin{Lemma}\label{2023.6.20.1}
Let $R$ be a Riemann surface. 
For $i=1,2$, let $F_i$ be a holomorphic function on $R$ which is not identically equal to $0$. 
Then $F_1F_2$ is not identically equal to $0$. 
\end{Lemma}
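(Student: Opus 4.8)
The plan is to reduce the statement to the identity theorem for holomorphic functions. The essential structural input is that $R$, being a Riemann surface, is by definition \emph{connected}; this is exactly what makes the claim true, since on a disconnected space one could place the zeros of $F_1$ and $F_2$ on different components and thereby obtain $F_1F_2\equiv0$ with neither factor vanishing identically. I would flag this dependence on connectedness at the outset, as it is the whole content of the hypothesis that $R$ is a Riemann surface rather than merely a one-dimensional complex manifold.

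First I would record that, for a holomorphic function $F_i$ on the connected Riemann surface $R$ with $F_i\not\equiv0$, the zero set $Z_i=\{p\in R\mid F_i(p)=0\}$ is a discrete closed subset. This is the identity theorem: if the zeros accumulated at some point $p_0$, then in a holomorphic coordinate chart around $p_0$ the function $F_i$ would have a non-isolated zero and hence vanish on the whole chart; the set of points admitting a neighbourhood on which $F_i\equiv0$ is then both open and closed, so by connectedness it equals $R$, contradicting $F_i\not\equiv0$. Consequently $U_i=R\setminus Z_i=\{p\in R\mid F_i(p)\neq0\}$ is open and dense for $i=1,2$.

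Next I would combine the two dense open sets. The intersection $U_1\cap U_2$ is open, and I would check that it is dense by the standard two-step argument: given any nonempty open $W\subseteq R$, density of $U_1$ gives a nonempty open set $W\cap U_1$, and density of $U_2$ then gives $(W\cap U_1)\cap U_2\neq\emptyset$, so $W\cap(U_1\cap U_2)\neq\emptyset$. In particular $U_1\cap U_2$ is nonempty, so there is a point $p\in R$ with $F_1(p)\neq0$ and $F_2(p)\neq0$. Since $F_1F_2$ is holomorphic on $R$ and $(F_1F_2)(p)=F_1(p)F_2(p)\neq0$, the product is not identically $0$, as desired.

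I do not expect a genuine obstacle here; the argument is entirely standard and the only computations involved are trivial. The single point demanding care is the explicit appeal to connectedness in the identity-theorem step, which is precisely what rules out the disconnected counterexample and justifies invoking the hypothesis in the stated form.
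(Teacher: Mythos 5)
Your proof is correct, and it rests on the same essential input as the paper's proof --- the identity theorem on a connected surface --- but it is routed differently. The paper argues by contradiction in one short step: pick $r_0$ with $F_1(r_0)\neq 0$, use continuity to get an open neighbourhood $H$ on which $F_1$ is nonvanishing, observe that $F_1F_2\equiv 0$ would force $F_2\equiv 0$ on $H$, and invoke the identity theorem in the form ``vanishing on a nonempty open subset implies vanishing identically'' to conclude $F_2\equiv 0$ on $R$, a contradiction. You instead argue directly: you use the identity theorem in its ``zeros are isolated'' form to show each zero set $Z_i$ is discrete and closed, deduce that each $U_i=R\setminus Z_i$ is open and dense, and intersect the two dense open sets to produce a point where both factors are nonzero. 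Your route proves slightly more (the nonvanishing locus of $F_1F_2$ is open and dense, not merely nonempty) and makes the dependence on connectedness explicit --- a point the paper leaves implicit in its citation of the identity theorem, and which, as you note, is exactly what blocks the disconnected counterexample. The cost is the extra bookkeeping: establishing discreteness of the $Z_i$, density of their complements, and the two-step density argument for $U_1\cap U_2$, none of which the paper needs since it only ever uses $F_1$'s nonvanishing on a single neighbourhood and never examines the structure of the zero sets.
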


\begin{proof}
Although this result is well known, for the sake to be complete and self-contained, we give a proof of this result. 
There exists a point $r_0\in R$ such that $F_1(r_0)\neq0$. 
Since $F_1$ is continuous, there exists an open neighborhood $H$ of $r_0$ such that $F_1(r)\neq0$ for any $r\in H$.  
We assume that $F_1F_2$ is identically equal to $0$. 
Then we have $F_2(r)=0$ for any $r\in H$. 
By the identity theorem, we have $F_2(r)=0$ for any $r\in R$. (e.g., \cite[p.~6, Theorem 1.11]{Forster}). 
We arrive at a contradiction. Therefore, $F_1F_2$ is not identically equal to $0$. 
\end{proof}

\begin{Lemma}\label{2023.4.22.11}
For $i=1,2$, let $F_i$ be a holomorphic function on $\mathbb{C}^2$ which is not identically equal to $0$ on $W$. 
Then $F_1F_2$ is not identically equal to $0$ on $W$. 
\end{Lemma}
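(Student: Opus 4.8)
The plan is to put on $W$ the structure of a connected Riemann surface and then to read off the conclusion directly from Lemma \ref{2023.6.20.1} applied to the restrictions $F_1|_W$ and $F_2|_W$. So the whole argument splits into two parts: first, show that $W$ is a connected one-dimensional complex manifold; second, translate the hypotheses into the hypotheses of Lemma \ref{2023.6.20.1}.

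First I would show that $W$ is a one-dimensional complex submanifold of $\mathbb{C}^2$. Lemma \ref{2023.4.22} (the Riemann singularity theorem) guarantees that the gradient $(\sigma_1(u),\sigma_3(u))$ does not vanish at any $u\in W$, so by the holomorphic implicit function theorem each point of $W$ has a neighborhood in which $W$ is the graph of a holomorphic function of one variable. The induced complex structure then makes $W$ a Hausdorff, second countable complex manifold of dimension one. Since $F_1$ and $F_2$ are holomorphic on $\mathbb{C}^2$, the restrictions $F_1|_W$ and $F_2|_W$ are holomorphic on $W$, and by hypothesis neither is identically equal to $0$.

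The step I expect to be the main obstacle is proving that $W$ is \emph{connected}, which is exactly what Lemma \ref{2023.6.20.1} requires, since its proof rests on the identity theorem. Here I would bring in the projection $\pi\colon\mathbb{C}^2\to\operatorname{Jac}(V)=\mathbb{C}^2/\Lambda$, which is the universal covering of the torus. By Lemma \ref{2023.5.13.1} the image $\pi(W)$ is the theta divisor $\Theta=I(V)$, which is biholomorphic to the connected curve $V$ via the Abel--Jacobi map, and by Proposition \ref{period} the set $W=\pi^{-1}(\Theta)$ is invariant under $\Lambda$. The key topological input is that the Abel--Jacobi map induces a surjection $\pi_1(V)\to\pi_1(\operatorname{Jac}(V))=\Lambda$ (indeed an isomorphism on first homology, since winding around the cycles $\mathfrak{a}_j,\mathfrak{b}_j$ shifts $I$ by the corresponding generators of $\Lambda$). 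Concretely, I would lift the composite of the universal covering $\widetilde{V}\to V$ with $I$ to a holomorphic map $\widetilde{I}\colon\widetilde{V}\to\mathbb{C}^2$; then $\widetilde{I}(\widetilde{V})\subseteq W$, and using the surjectivity just mentioned together with the transitivity of the deck action one checks that every point of $W$ lies in the image, so $\widetilde{I}(\widetilde{V})=W$. As the continuous image of the connected surface $\widetilde{V}$, the set $W$ is connected, hence a connected Riemann surface in the sense used in Lemma \ref{2023.6.20.1}.

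With $W$ established as a connected Riemann surface, I would conclude by applying Lemma \ref{2023.6.20.1} with $R=W$ and the functions $F_1|_W$, $F_2|_W$: the product $(F_1F_2)|_W=F_1|_W\cdot F_2|_W$ is not identically equal to $0$ on $W$, which is precisely the assertion. The only nontrivial ingredient is the connectedness of $W$; the smoothness of $W$ and the reduction to Lemma \ref{2023.6.20.1} are then routine.
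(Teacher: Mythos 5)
Your proof is correct, and it reaches the conclusion by a genuinely different route than the paper, even though both arguments hinge on the same key fact: the lift $\widetilde{I}\colon\widetilde{V}\to\mathbb{C}^2$ of the Abel--Jacobi map satisfies $\widetilde{I}(\widetilde{V})=W$. The paper puts no structure on $W$ at all: it pulls the functions back, setting $\widetilde{F}_i=F_i\circ\widetilde{I}$, and applies Lemma \ref{2023.6.20.1} on the connected Riemann surface $\widetilde{V}$; surjectivity of $\widetilde{I}$ onto $W$ is what lets ``not identically $0$ on $W$'' translate into ``not identically $0$ on $\widetilde{V}$'' and back. You instead apply Lemma \ref{2023.6.20.1} with $R=W$ itself, which obliges you to verify two extra points: that $W$ is a one-dimensional complex manifold --- via Lemma \ref{2023.4.22} and the implicit function theorem, an ingredient the paper's proof of this lemma does not use at all --- and that $W$ is connected, which you obtain from the same surjectivity $\widetilde{I}(\widetilde{V})=W$. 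Your justification of that surjectivity (the surjection $\pi_1(V)\to\pi_1(\operatorname{Jac}(V))=\Lambda$ induced by $I$, combined with the action of the deck group on fibers of $p$) is in fact more detailed than the paper's, which deduces $W=\widetilde{I}(\widetilde{V})$ from Lemma \ref{2023.5.13.1} in a single line; your argument is exactly what fills that line in. The trade-off: the paper's pullback trick is leaner, since it needs neither smoothness nor connectedness of $W$ and so can cite the Riemann vanishing theorem alone; your version invokes the Riemann singularity theorem in addition, but proves strictly more along the way, namely that $W$ is a connected smooth embedded curve on which the restrictions $F_i|_W$ are holomorphic --- a fact of independent interest, though not needed for the statement.
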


\begin{proof}
Let $\widetilde{V}$ be the universal covering of $V$ and $p : \widetilde{V}\to V$ be the projection. 
For $i=1,3$, let $\widetilde{\omega}_i$ be the pullback of $\omega_i$ with respect to $p$ and $\widetilde{\boldsymbol \omega}={}^t(\widetilde{\omega}_1,\widetilde{\omega}_3)$. 
We take a point $\widetilde{\infty}\in\widetilde{V}$ such that $p(\widetilde{\infty})=\infty$. 
We consider the map 
\[\widetilde{I}\colon\quad \widetilde{V}\to\mathbb{C}^2,\qquad P\mapsto\int_{\widetilde{\infty}}^P\widetilde{\boldsymbol \omega}.\]
Let $\kappa: \mathbb{C}^2\to\operatorname{Jac}(V)$ be the natural projection. 
Then we have the following commutative diagram:
\[
\xymatrix{
\widetilde{V} \ar[r]^-{\widetilde{I}} \ar[d]_-p & \mathbb{C}^2\ar[d]^-{\kappa} \\ 
V \ar[r]_-I & \operatorname{Jac}(V).
}
\]
From Lemma \ref{2023.5.13.1}, we have $W=\widetilde{I}(\widetilde{V})$. 
For $i=1,2$, let $\widetilde{F}_i=F_i\circ\widetilde{I}$. 
Then $\widetilde{F}_i$ is a holomorphic function on $\widetilde{V}$. 
Since $F_i$ is not identically equal to $0$ on $W$, $\widetilde{F}_i$ is not identically equal to $0$ on $\widetilde{V}$. 
From Lemma \ref{2023.6.20.1}, $\widetilde{F}_1\widetilde{F}_2$ is not identically equal to $0$ on $\widetilde{V}$. 
Therefore, $F_1F_2$ is not identically equal to $0$ on $W$. 
\end{proof}

\begin{Lemma}\label{2023.4.23.1}
Let $F$ be a meromorphic function on $\mathbb{C}^2$. 
We can express $F$ in the form of 
\begin{equation}
F=\sigma^m\frac{G_1}{G_2},\label{2024.2.21.1}
\end{equation}
where $m$ is an integer and $G_i$, $i=1,2$, are holomorphic functions on $\mathbb{C}^2$ which are not identically equal to $0$ on $W$. 
In particular, $m$ is uniquely determined by $F$. 
\end{Lemma}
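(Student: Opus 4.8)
The plan is to reduce the statement to two questions about \emph{entire} functions and then to extract the $\sigma$-adic part of the numerator and the denominator separately. First I would dispose of the degenerate case $F\equiv0$ (for which no such representation with $G_1\not\equiv0$ on $W$ exists) and assume $F\not\equiv0$. Since $\mathbb{C}^2$ is a Stein manifold with $H^2(\mathbb{C}^2,\mathbb{Z})=0$, the second Cousin problem is solvable on $\mathbb{C}^2$, so $F$ can be written as a quotient $F=P/Q$ of holomorphic functions $P,Q$ on $\mathbb{C}^2$ with $Q\not\equiv0$; as $F\not\equiv0$ we also have $P\not\equiv0$. It then suffices to write each of $P$ and $Q$ in the form $\sigma^aP_1$, resp.\ $\sigma^bQ_1$, with $P_1,Q_1$ holomorphic and not identically $0$ on $W$, and to set $m=a-b$, $G_1=P_1$, $G_2=Q_1$.

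To produce such a factorization of $P$, I would apply Lemma \ref{2023.3.24} iteratively: whenever the current factor is identically $0$ on $W$, Lemma \ref{2023.3.24} supplies one more factor of $\sigma$, and I repeat. The substantive point---and the step I expect to be the main obstacle---is that this process terminates, so that $m$ is a well-defined \emph{finite} integer and the final $P_1$ is genuinely not identically $0$ on $W$. I would establish termination by a local order count at a point $w_0\in W$ (such a point exists since $W=\widetilde{I}(\widetilde{V})\neq\emptyset$). By the Riemann singularity theorem (Lemma \ref{2023.4.22}), $(\sigma_1(w_0),\sigma_3(w_0))\neq(0,0)$, so the linear part of $\sigma$ at $w_0$ is nonzero and $\sigma$ has vanishing order exactly $1$ there. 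Writing $\operatorname{ord}_{w_0}$ for the order of vanishing at $w_0$ and using its additivity in the regular local ring $\mathcal{O}_{\mathbb{C}^2,w_0}$ (equivalently, Krull's intersection theorem $\bigcap_{k\ge1}\mathfrak{m}_{w_0}^k=(0)$), a relation $P=\sigma^kP_k$ forces $\operatorname{ord}_{w_0}(P)=k+\operatorname{ord}_{w_0}(P_k)\ge k$. Since $P\not\equiv0$ and $\mathbb{C}^2$ is connected, $\operatorname{ord}_{w_0}(P)$ is finite, which bounds $k$. Hence there is a largest $a$ with $P=\sigma^aP_1$, and maximality together with the contrapositive of Lemma \ref{2023.3.24} forces $P_1$ to be not identically $0$ on $W$. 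The same argument applied to $Q$ yields $b$ and $Q_1$, and setting $m=a-b$, $G_1=P_1$, $G_2=Q_1$ gives the decomposition (\ref{2024.2.21.1}).

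For the uniqueness of $m$, I would compare two representations $F=\sigma^{m_1}G_1/G_2=\sigma^{m_2}H_1/H_2$ with $G_1,G_2,H_1,H_2$ not identically $0$ on $W$, assuming without loss of generality $m_1\ge m_2$. Clearing denominators and cancelling $\sigma^{m_2}$ gives the identity of holomorphic functions $\sigma^{m_1-m_2}G_1H_2=G_2H_1$ on $\mathbb{C}^2$. If $m_1>m_2$, the left-hand side is divisible by $\sigma$ and hence identically $0$ on $W$, so $G_2H_1$ would be identically $0$ on $W$; but $G_2$ and $H_1$ are each not identically $0$ on $W$, so Lemma \ref{2023.4.22.11} shows $G_2H_1$ is not identically $0$ on $W$, a contradiction. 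Therefore $m_1=m_2$, and $m$ is uniquely determined by $F$.

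Among these steps, the global quotient representation $F=P/Q$ is a standard consequence of Cousin theory on $\mathbb{C}^2$ to be cited rather than reproved, and the uniqueness of $m$ is immediate from Lemma \ref{2023.4.22.11}. The only place that genuinely requires an argument beyond the lemmas already in hand is the finiteness of the extraction, that is, showing that only finitely many factors of $\sigma$ can be removed; there the local structure of $\mathcal{O}_{\mathbb{C}^2,w_0}$ together with the order-$1$ vanishing of $\sigma$ furnished by Lemma \ref{2023.4.22} is the natural tool.
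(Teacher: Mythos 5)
Your proposal is correct and takes essentially the same route as the paper: a global quotient representation $F=P/Q$ (the paper cites the Poincar\'e theorem where you invoke Cousin~II on the Stein manifold $\mathbb{C}^2$), extraction of powers of $\sigma$ via Lemma~\ref{2023.3.24}, and uniqueness of $m$ via exactly the identity $\sigma^{m-n}G_1H_2=G_2H_1$ and Lemma~\ref{2023.4.22.11}. Your local order-of-vanishing argument at a point of $W$ (using Lemma~\ref{2023.4.22} to see $\sigma$ vanishes to order $1$ there) makes explicit the termination of the extraction, and your treatment of $F\equiv0$ flags a degenerate case, both of which the paper leaves implicit; these are refinements, not a different method.
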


\begin{proof}
By the Poincar\'e theorem, there exist holomorphic functions $F_1$ and $F_2$ on $\mathbb{C}^2$ such that $F=F_1/F_2$ (e.g., \cite[p.~336, Theorem 5.9]{Freitag}). 
From Lemma \ref{2023.3.24}, for $i=1,2$, there exist a non-negative integer $m_i$ and a holomorphic function $G_i$ such that $G_i$ is not identically equal to $0$ on $W$ and $F_i=\sigma^{m_i}G_i$. 
Therefore, we have 
\[F=\sigma^{m_1-m_2}\frac{G_1}{G_2}.\]
Let $m=m_1-m_2$. 
Then we obtain (\ref{2024.2.21.1}). 
Assume that we have another expression of $F$ in the form of (\ref{2024.2.21.1}) 
\[F=\sigma^n\frac{H_1}{H_2},\]
where $n$ is an integer and $H_i$, $i=1,2$, are holomorphic functions on $\mathbb{C}^2$ which are not identically equal to $0$ on $W$. 
Without loss of generality, we can assume $m\ge n$. 
Then we have 
\[\sigma^{m-n}G_1H_2=G_2H_1.\]
From Lemma \ref{2023.4.22.11}, $G_2H_1$ is not identically equal to $0$ on $W$. 
Therefore, we have $m=n$. 
\end{proof}


\begin{Proposition}
As meromorphic functions on $\mathbb{C}^2$, we have $f_5\neq g_5$. 
\end{Proposition}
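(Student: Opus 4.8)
The plan is to prove the (formally stronger) statement that the Laurent expansion of $f_5-g_5$ about the origin has an explicit nonzero coefficient. If $f_5=g_5$ as meromorphic functions, then $f_5-g_5$ would vanish on the dense open set where it is holomorphic, forcing every coefficient of its expansion to be $0$; so exhibiting a single nonvanishing coefficient settles the claim. Note first that the origin lies on $W$, and by Theorem \ref{2023.4.23.2} we have $\sigma(u)=\tfrac13u_1^3-u_3+\sum_{i+3j\ge7}\mu_{i,j}u_1^iu_3^j$, so $\sigma_1=u_1^2(1+\cdots)$ is $u_1^2$ times a unit at the origin while $\sigma_3=-1+\cdots$ is itself a unit. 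Since $\sigma$ is entire, $f_5$ and $g_5$ are each of the form $u_1^{-N}\times(\text{holomorphic})$ near $0$ and thus admit Laurent expansions $\sum_{i\ge -N,\,j\ge0}a_{ij}u_1^iu_3^j$ with uniquely determined coefficients.

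The key preparatory step is to record the grading under which $\sigma$ is homogeneous. Assigning weights $w(u_1)=1$, $w(u_3)=3$, and $w(\lambda_{2k})=-2k$, the homogeneity of the $\mu_{i,j}$ asserted in Theorem \ref{2023.4.23.2} makes $\sigma$ homogeneous of weight $3$; hence $\sigma(2u)$ also has weight $3$, $\sigma_1$ has weight $2$, $\sigma_3$ has weight $0$, and one checks directly that $f_5$ and $g_5$ are each homogeneous of weight $-5$. Consequently every Laurent monomial $\lambda^\alpha u_1^iu_3^j$ occurring in $f_5-g_5$ satisfies $i+3j=\deg_\lambda(\lambda^\alpha)-5$. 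Because every nonconstant $\mu_{i,j}$ involves the $\lambda$'s with total degree at least $4$, any $\lambda$-dependent contribution has $i+3j\ge-1$. In particular the monomials of $u$-weight $-5$, namely $u_1^{-5}$ and $u_3u_1^{-8}$, can only arise from the $\lambda$-independent part of the expansion, so their coefficients coincide with those computed in the rational limit $\lambda_4=\lambda_6=\lambda_8=\lambda_{10}=0$ for \emph{every} admissible $\lambda$.

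It then remains to evaluate that $\lambda$-independent part, which amounts to using only the Schur term $\sigma=\tfrac13u_1^3-u_3$. Here $\sigma_1=u_1^2$, $\sigma_3=-1$, $\sigma_{11}=2u_1$, $\sigma_{13}=\sigma_{33}=0$, and $\sigma(2u)=\tfrac83u_1^3-2u_3$, so that $f_5=\tfrac43u_1^{-5}-u_3u_1^{-8}$ and $g_5=u_1^{-5}$, whence $f_5-g_5=\tfrac13u_1^{-5}-u_3u_1^{-8}$. By the previous paragraph the coefficient of $u_1^{-5}$ in the genuine expansion of $f_5-g_5$ equals $\tfrac13$ for every admissible $\lambda$; in particular $f_5-g_5\not\equiv0$, which is the assertion. (One may also phrase the conclusion through Lemma \ref{2023.4.23.1}: the computation shows $f_5-g_5$ is a nonzero meromorphic function, so it has a well-defined $\sigma$-order.)

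The main obstacle is not the short explicit evaluation but the bookkeeping that isolates it: one must justify that the lowest-weight terms of the true (curved) expansion agree with those of the rational limit, i.e.\ that the $\lambda$-dependent corrections, which increase $\deg_\lambda$ and therefore increase $i+3j$, cannot reach the weight-$(-5)$ monomials $u_1^{-5}$ and $u_3u_1^{-8}$. Once the homogeneity of $f_5$ and $g_5$ is established this is automatic, but it is the step demanding care, since a priori one might worry that the infinitely many higher-order $\mu_{i,j}$ conspire to alter the leading Laurent coefficient; the weight argument rules this out cleanly.
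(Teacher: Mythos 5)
Your overall strategy---grade by $w(u_1)=1$, $w(u_3)=3$, $w(\lambda_{2k})=-2k$, note that $f_5-g_5$ is homogeneous of weight $-5$, isolate the $\lambda$-free part of the expansion, and evaluate it in the rational limit---is sound, and it is in substance the same weight bookkeeping the paper uses. But one step, as written, is false, and it is the step on which your whole coefficient-extraction framework rests: $\sigma_1$ is \emph{not} $u_1^2$ times a unit at the origin. From (\ref{4.27.1}) one has $\sigma_1=u_1^2+\sum_{i+3j\ge7,\,i\ge1}i\,\mu_{i,j}u_1^{i-1}u_3^j$, and Theorem \ref{2023.4.23.2} gives no divisibility of the correction terms by $u_1^2$; indeed the $(i,j)=(1,2)$ term contributes $\mu_{1,2}u_3^2$ with no factor of $u_1$ at all, and for the genus-$2$ sigma function $\mu_{1,2}$ is a nonzero multiple of $\lambda_4$, so for a generic curve the polar locus of $f_5$ and $g_5$ near $0$ is the curve $\{\sigma_1=0\}\ne\{u_1=0\}$. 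Consequently neither function is of the form $u_1^{-N}\times(\text{holomorphic})$: expanding $1/\sigma_1$ produces terms such as $\mu_{1,2}^{\,k}u_3^{2k}u_1^{-2-2k}$, so the powers of $u_1$ are not bounded below uniformly in the power of $u_3$, and your claimed Laurent expansion $\sum_{i\ge-N,\,j\ge0}a_{ij}u_1^iu_3^j$ does not exist in that form. For the same reason your enumeration of the weight-$(-5)$ monomials as only $u_1^{-5}$ and $u_3u_1^{-8}$ (which tacitly uses $i\ge-8$) is unjustified: a priori every $u_1^{-5-3j}u_3^j$, $j\ge0$, is available.

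The gap is reparable, and either repair turns your argument into essentially the paper's proof. First option: clear denominators. If $f_5=g_5$, then the \emph{entire} function $\sigma(2u)\cdot2\sigma_1(u)^3-2\sigma_1(u)^4\bigl(\sigma_1^2\sigma_{33}-2\sigma_1\sigma_3\sigma_{13}+\sigma_3^2\sigma_{11}\bigr)(u)$ vanishes identically; it is homogeneous of weight $9$, its $\lambda$-free lowest part is forced by your weight argument to equal the rational-limit value $\tfrac43u_1^9-4u_1^6u_3\ne0$ (your computation multiplied by $4u_1^{14}$), and no $\lambda$-dependent term (which has $i+3j\ge13$) can cancel it---this is exactly the paper's expansion with its homogeneous coefficients $\nu_{i,j}$. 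Second option: expand instead in the ring $\mathbb{C}((u_1))[[u_3]]$, where $\sigma_1$ \emph{is} invertible because its $u_3$-constant term is $u_1^2+O(u_1^6)\in\mathbb{C}((u_1))^{\times}$; there each coefficient is, by homogeneity, a finite (homogeneous) polynomial in the $\lambda$'s, the coefficient of $u_1^{-5}u_3^0$ has $\lambda$-degree $0$ and hence equals its rational-limit value $\tfrac13\ne0$, as you computed. So your key computation and the homogeneity argument are correct, but the proof as submitted rests on a false structural claim about $\sigma_1$ and needs one of these fixes to stand.
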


\begin{proof}
We consider the following holomorphic functions on $\mathbb{C}^2$:  
\begin{gather*}
f_{5,n}(u)=\sigma(2u),\qquad f_{5,d}(u)=2\sigma_1(u)^4,\\
g_{5,n}(u)=(\sigma_1^2\sigma_{33}-2\sigma_1\sigma_3\sigma_{13}+\sigma_3^2\sigma_{11})(u),\qquad g_{5,d}(u)=2\sigma_1(u)^3.
\end{gather*}
Then we have $f_5=f_{5,n}/f_{5,d}$ and $g_5=g_{5,n}/g_{5,d}$. 
From Theorem \ref{2023.4.23.2}, the power series expansion of $f_{5,n}g_{5,d}-f_{5,d}g_{5,n}$ around the origin has the following form: 
\[(f_{5,n}g_{5,d}-f_{5,d}g_{5,n})(u)=\frac{4}{3}u_1^9-4u_1^6u_3+\sum_{i+3j\ge13}\nu_{i,j}u_1^iu_3^j,\]
where $\nu_{i,j}$ is a homogeneous polynomial in $\mathbb{Q}[\lambda_4, \lambda_6, \lambda_8, \lambda_{10}]$
of degree $i+3j-9$ if $\nu_{i,j}\neq0$.
Thus, the holomorphic function $f_{5,n}g_{5,d}-f_{5,d}g_{5,n}$ is not identically equal to $0$ on $\mathbb{C}^2$. 
Therefore, we obtain the statement of the proposition. 
\end{proof}

From Lemmas \ref{2023.5.13.1}, \ref{2023.3.4.2}, and \ref{2023.4.22}, we have 
\begin{equation}
\big\{u\in W\;|\;\sigma_1(u)=0\big\}=\Lambda.\label{2023.6.10.111}
\end{equation}
The values of the meromorphic functions $f_5$ and $g_5$ are defined on $W\backslash\Lambda$. 
From Lemmas \ref{2023.3.4.4} and \ref{2023.3.4.3}, the values of $f_5$ and $g_5$ coincide on $W\backslash\Lambda$. 
The set $W\backslash\Lambda$ is dense in $W$. 
Therefore, from Lemma \ref{2023.3.24}, the function $f_5-g_5$ can be divided by the sigma function at least one time. 

\begin{Proposition}[{\cite[Lemma 7.1]{AB2019}}]\label{2023.4.23.44}
We have the following decomposition: 
\[f_5-g_5=\sigma \frac{h}{2\sigma_1^4},\]
where 
\[h=\sigma_1(\sigma_3\sigma_{113}-\sigma_1\sigma_{133}-\sigma_{11}\sigma_{33}+2\sigma_{13}^2)-2\sigma_3^3-\sigma_3\sigma_{11}\sigma_{13}+\sigma(\sigma_{11}\sigma_{133}-\sigma_{13}\sigma_{113}+3\sigma_3\sigma_{33})-\sigma^2\sigma_{333}.\]
\end{Proposition}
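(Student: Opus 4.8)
The plan is to clear denominators and reduce the assertion to a single identity between entire functions on $\mathbb{C}^2$. Writing $f_5-g_5$ over the common denominator $2\sigma_1^4$ and comparing with the claimed right-hand side $\sigma h/(2\sigma_1^4)$, the statement is equivalent to
\begin{equation}
\sigma(2u)-\sigma_1\big(\sigma_1^2\sigma_{33}-2\sigma_1\sigma_3\sigma_{13}+\sigma_3^2\sigma_{11}\big)=\sigma h,\label{star}
\end{equation}
where every function is evaluated at $u$. Both sides of (\ref{star}) are holomorphic on all of $\mathbb{C}^2$, so it suffices to prove (\ref{star}) as an identity of entire functions.

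First I would record that the left-hand side $N(u)$ of (\ref{star}) is divisible by $\sigma$. By Lemmas \ref{2023.3.4.4} and \ref{2023.3.4.3} the functions $f_5$ and $g_5$ both represent $Y$ on $W\backslash\Lambda$, so $N=2\sigma_1^4(f_5-g_5)$ vanishes on $W\backslash\Lambda$; since $W\backslash\Lambda$ is dense in $W$ and $N$ is continuous, $N$ vanishes on all of $W$, and Lemma \ref{2023.3.24} yields an entire $G$ with $N=\sigma G$. The remaining and essential task is to identify $G$ with the explicit polynomial $h$.

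To pin down $G$, I would invoke the addition theorem for the two-dimensional sigma function, which expresses $\sigma(u+v)\sigma(u-v)$ as $\sigma(u)^2\sigma(v)^2$ times a polynomial $\Phi(u,v)$ in the $\wp_{ij}(u)$ and $\wp_{ij}(v)$. Since $\sigma$ is odd and $\sigma(0)=0$, the substitution $v=u+s$ gives $\sigma(u+v)\sigma(u-v)=-\sigma(2u+s)\sigma(s)$, while $\Phi(u,u)=0$; letting $s\to0$ along the direction $s=(0,\varepsilon)$, where $\sigma(s)=-\varepsilon+O(\varepsilon^3)$ has a simple zero, and matching the coefficient of $\varepsilon$ on both sides extracts the closed form $\sigma(2u)=\sigma(u)^4\,\partial_{v_3}\Phi(u,v)\big|_{v=u}$, a product of $\sigma(u)^4$ with a polynomial in the $\wp_{ij}(u)$ and $\wp_{ijk}(u)$. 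Substituting this into (\ref{star}), I would then convert back to sigma-derivatives by means of $\sigma^2\wp_{ij}=\sigma_i\sigma_j-\sigma\sigma_{ij}$ and its $u$-derivatives, so that the poles of the $\wp_{ijk}$ along $W$ cancel against the factor $\sigma(u)^4$, and collect terms until exactly one factor of $\sigma$ remains; reading off the cofactor should reproduce $h$.

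The main obstacle is precisely this last algebraic reduction: the degenerate limit $v\to u$ must be carried to the correct order, and after re-expressing the $\wp_{ijk}$ in terms of $\sigma$ and its derivatives one must cancel the spurious negative powers of $\sigma$ and collect the remaining terms into the stated form of $h$ without error. As an independent check, and to fix all numerical constants, I would expand both sides of (\ref{star}) as power series around the origin using Theorem \ref{2023.4.23.2}: the leading terms give $\tfrac23u_1^3-2u_3$ on the left, while $\sigma h$ contributes $2\big(\tfrac13u_1^3-u_3\big)$ since $h\to-2\sigma_3^3\to2$, confirming the normalization, and agreement to higher order provides a stringent verification that $G=h$.
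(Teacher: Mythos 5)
Your proposal is correct and takes essentially the same route as the proof this paper relies on: the Proposition is stated here without proof, by citation to \cite[Lemma 7.1]{AB2019}, whose method the introduction describes exactly as using the addition theorem for the sigma function --- i.e., your degenerate limit $v\to u$ of $\sigma(u+v)\sigma(u-v)=\sigma(u)^2\sigma(v)^2\Phi(u,v)$ yields the duplication formula $\sigma(2u)=\sigma(u)^4\big(\wp_{11}\wp_{133}-\wp_{13}\wp_{113}+\wp_{333}\big)(u)$, and rewriting the $\wp_{ijk}$ via $\sigma^2\wp_{ij}=\sigma_i\sigma_j-\sigma\sigma_{ij}$ does produce, identically, $\sigma(2u)=\sigma_1(\sigma_1^2\sigma_{33}-2\sigma_1\sigma_3\sigma_{13}+\sigma_3^2\sigma_{11})+\sigma h$ with the stated $h$. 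Your normalization check (leading term $\tfrac{2}{3}u_1^3-2u_3$ on both sides, $h(0)=2$) is consistent with Theorem \ref{2023.4.23.2} and with the paper's own remark after the Proposition.
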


From (\ref{2023.6.10.111}), we find that $2\sigma_1^4$ is not identically equal to $0$ on $W$. 
From Theorem \ref{2023.4.23.2}, we have $\sigma(0)=\sigma_1(0)=\sigma_{11}(0)=0$ and $\sigma_3(0)=-1$. 
Thus, we have $h(0)=2$. 
Therefore, $h$ is not identically equal to $0$ on $W$. 

\begin{Proposition}
As meromorphic functions on $\mathbb{C}^2$, we have $f_2\neq g_2$. 
\end{Proposition}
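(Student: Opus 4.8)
The plan is to mirror the power series argument used above for $f_5\neq g_5$: exhibit both $f_2$ and $g_2$ as ratios of holomorphic functions on $\mathbb{C}^2$ and show that the resulting ``cross'' numerator is not identically $0$. For $f_2$ the numerator and denominator are immediate, namely $f_{2,n}=-\sigma_3$ and $f_{2,d}=\sigma_1$. For $g_2$ I would first rewrite $\wp_{11}$ as a ratio of holomorphic functions: since $\wp_{11}=-\partial_{u_1}^2\log\sigma=(\sigma_1^2-\sigma\sigma_{11})/\sigma^2$, substituting $2u$ gives $g_2(u)=\frac{\sigma_1(2u)^2-\sigma(2u)\sigma_{11}(2u)}{2\sigma(2u)^2}$, so I set $g_{2,n}(u)=\sigma_1(2u)^2-\sigma(2u)\sigma_{11}(2u)$ and $g_{2,d}(u)=2\sigma(2u)^2$; both are holomorphic on $\mathbb{C}^2$ because $\sigma$ and its partial derivatives are entire and $u\mapsto 2u$ is linear.

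With these in hand, set $N=f_{2,n}g_{2,d}-f_{2,d}g_{2,n}$. Then $f_2-g_2=N/(f_{2,d}g_{2,d})$ as meromorphic functions, so it suffices to prove that the holomorphic function $N$ is not identically $0$ on $\mathbb{C}^2$. To do this I would compute the power series expansion of $N$ around the origin using Theorem \ref{2023.4.23.2}. Assigning weight $1$ to $u_1$ and weight $3$ to $u_3$, the expansion $\sigma=\frac13u_1^3-u_3+\sum_{i+3j\ge 7}\mu_{i,j}u_1^iu_3^j$ has a weight gap: there are no terms of weight $4$, $5$, or $6$. Consequently $\sigma_1=u_1^2+O(\mathrm{wt}\ge 6)$, $\sigma_3=-1+O(\mathrm{wt}\ge 4)$, and $\sigma_{11}=2u_1+O(\mathrm{wt}\ge 5)$, and each of the three summands of $N$ has lowest weight exactly $6$, with all $\lambda$-dependent corrections of strictly higher weight.

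The last step is then the leading computation. Substituting $\sigma(2u)=\frac83u_1^3-2u_3+O(\mathrm{wt}\ge 7)$, $\sigma_1(2u)=4u_1^2+O(\mathrm{wt}\ge 6)$, and $\sigma_{11}(2u)=4u_1+O(\mathrm{wt}\ge 5)$, the weight-$6$ part of $N$ should come out to $\frac{80}{9}u_1^6-\frac{88}{3}u_1^3u_3+8u_3^2$, which is nonzero; equivalently, since the positive-degree polynomials $\mu_{i,j}$ all vanish in the rational limit $\lambda_4=\lambda_6=\lambda_8=\lambda_{10}=0$, where $\sigma=\frac13u_1^3-u_3$ exactly, this is just $N$ computed in that limit. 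This shows $N\not\equiv 0$ and hence $f_2\neq g_2$. I expect the only delicate point to be the bookkeeping: verifying that the $\lambda$-dependent tails of $\sigma(2u)$, $\sigma_1(2u)$, $\sigma_{11}(2u)$, $\sigma_1(u)$, and $\sigma_3(u)$ cannot contribute to weight $6$, so that the clean rational-limit computation really captures the lowest-weight part of $N$. Given the weight gap this is routine, so the main obstacle is the initial setup of $g_2$ as a ratio of entire functions rather than any genuine analytic difficulty.
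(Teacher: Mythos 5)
Your proposal is correct and follows essentially the same route as the paper: the paper's proof uses exactly your decomposition $f_{2,n}=-\sigma_3$, $f_{2,d}=\sigma_1$, $g_{2,n}(u)=\sigma_1(2u)^2-\sigma(2u)\sigma_{11}(2u)$, $g_{2,d}(u)=2\sigma(2u)^2$, and shows via Theorem \ref{2023.4.23.2} that $f_{2,n}g_{2,d}-f_{2,d}g_{2,n}$ expands as $\frac{80}{9}u_1^6-\frac{88}{3}u_1^3u_3+8u_3^2$ plus terms of weight $i+3j\ge10$, matching your leading-weight computation. Your weight-gap bookkeeping (corrections to $\sigma$ having weight $\ge 7$, hence contributing only weight $\ge 10$ to the cross numerator) is precisely what underlies the paper's statement about the $\xi_{i,j}$, so there is no gap.
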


\begin{proof}
We have 
\[\wp_{11}(2u)=\frac{\sigma_1(2u)^2-\sigma(2u)\sigma_{11}(2u)}{\sigma(2u)^2}.\]
We consider the following holomorphic functions on $\mathbb{C}^2$: 
\begin{gather*}
f_{2,n}(u)=-\sigma_3(u),\qquad f_{2,d}(u)=\sigma_1(u),\\
g_{2,n}(u)=\sigma_1(2u)^2-\sigma(2u)\sigma_{11}(2u),\qquad g_{2,d}(u)=2\sigma(2u)^2.
\end{gather*}
Then we have $f_2=f_{2,n}/f_{2,d}$ and $g_2=g_{2,n}/g_{2,d}$. 
From Theorem \ref{2023.4.23.2}, the power series expansion of $f_{2,n}g_{2,d}-f_{2,d}g_{2,n}$ around the origin has the following form: 
\[(f_{2,n}g_{2,d}-f_{2,d}g_{2,n})(u)=\frac{80}{9}u_1^6-\frac{88}{3}u_1^3u_3+8u_3^2+\sum_{i+3j\ge10}\xi_{i,j}u_1^iu_3^j,\]
where $\xi_{i,j}$ is a homogeneous polynomial in $\mathbb{Q}[\lambda_4, \lambda_6, \lambda_8, \lambda_{10}]$
of degree $i+3j-6$ if $\xi_{i,j}\neq0$.
Thus, the holomorphic function $f_{2,n}g_{2,d}-f_{2,d}g_{2,n}$ is not identically equal to $0$ on $\mathbb{C}^2$. 
Therefore, we obtain the statement of the proposition. 
\end{proof}

The value of the meromorphic function $f_2$ is defined on $W\backslash\Lambda$. 
Let $\{\gamma_i\}_{i=1}^5$ be the set of roots of the polynomial $M(X)$. 
For $1\le i\le 5$, let $\Lambda_i$ be the preimage of $I\big((\gamma_i,0)\big)$ under the projection $\kappa: \mathbb{C}^2\to\operatorname{Jac}(V)$. 
Let $\Lambda_0=\Lambda$. 
The value of the meromorphic function $g_2$ is defined on $W\backslash(\bigcup_{i=0}^5\Lambda_i)$. 
From Lemmas \ref{2023.3.4.2} and \ref{2023.3.4.1}, the values of $f_2$ and $g_2$ coincide on $W\backslash(\bigcup_{i=0}^5\Lambda_i)$. 
The set $W\backslash(\bigcup_{i=0}^5\Lambda_i)$ is dense in $W$. 
Therefore, from Lemma \ref{2023.3.24}, the function $f_2-g_2$ can be divided by the sigma function at least one time. 
Let $\sigma_{\operatorname{rat}}$ be the polynomial obtained by substituting $\lambda_4=\lambda_6=\lambda_8=\lambda_{10}=0$ into the right hand side of (\ref{4.27.1}).  
Then we have $\sigma_{\operatorname{rat}}=\frac{1}{3}u_1^3-u_3$. 

\begin{Proposition}[{\cite[Lemma 7.2]{AB2019}}]
In the rational limit, i.e., $\lambda_4=\lambda_6=\lambda_8=\lambda_{10}=0$, we have the following decomposition: 
\[f_2-g_2=\sigma_{\operatorname{rat}}\frac{3(10u_1^3-3u_3)}{u_1^2(4u_1^3-3u_3)^2}.\]
\end{Proposition}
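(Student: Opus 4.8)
The plan is to verify the identity by direct computation, using the fact that in the rational limit the sigma function reduces to the polynomial $\sigma_{\operatorname{rat}}=\frac{1}{3}u_1^3-u_3$. By Theorem \ref{2023.4.23.2}, each coefficient $\mu_{i,j}$ with $i+3j\ge7$ is a homogeneous polynomial in $\lambda_4,\lambda_6,\lambda_8,\lambda_{10}$ of degree $i+3j-3\ge4$, and therefore vanishes under $\lambda_4=\lambda_6=\lambda_8=\lambda_{10}=0$. Since $\sigma$ is entire, its Taylor series (\ref{4.27.1}) converges to it on all of $\mathbb{C}^2$; in the rational limit this series terminates, so $\sigma=\sigma_{\operatorname{rat}}$ identically. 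Consequently every ingredient of $f_2$ and $g_2$ becomes an explicit rational function of $u_1$ and $u_3$, and the problem is reduced to polynomial algebra.

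First I would record the derivatives $\sigma_1=u_1^2$, $\sigma_3=-1$, $\sigma_{11}=2u_1$, which give at once
\[f_2=-\frac{\sigma_3}{\sigma_1}=\frac{1}{u_1^2}.\]
Next I would compute $\wp_{11}=(\sigma_1^2-\sigma\sigma_{11})/\sigma^2$, obtaining
\[\wp_{11}(u)=\frac{\frac{1}{3}u_1^4+2u_1u_3}{\bigl(\frac{1}{3}u_1^3-u_3\bigr)^2},\]
and then substitute $u\mapsto2u$. After simplification, in which the denominator factors as $\frac{4}{9}(4u_1^3-3u_3)^2$, this yields
\[g_2=\frac{\wp_{11}(2u)}{2}=\frac{3u_1(2u_1^3+3u_3)}{(4u_1^3-3u_3)^2}.\]

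Finally I would combine $f_2-g_2$ over the common denominator $u_1^2(4u_1^3-3u_3)^2$ and expand the numerator
\[(4u_1^3-3u_3)^2-3u_1^3(2u_1^3+3u_3)=10u_1^6-33u_1^3u_3+9u_3^2.\]
The decisive step is to factor this quadratic in the variables $u_1^3$ and $u_3$ as
\[10u_1^6-33u_1^3u_3+9u_3^2=\Bigl(\tfrac{1}{3}u_1^3-u_3\Bigr)\cdot3(10u_1^3-3u_3)=3\,\sigma_{\operatorname{rat}}\,(10u_1^3-3u_3),\]
which immediately delivers the asserted decomposition. There is no genuine conceptual obstacle here; the only delicate points are the arithmetic of the $u\mapsto2u$ scaling and the correct isolation of the $\sigma_{\operatorname{rat}}$ factor from the numerator, so I would verify the numerator expansion and its factorization with particular care.
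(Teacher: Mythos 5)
Your computation is correct, and each step checks out: in the rational limit $\sigma_1=u_1^2$, $\sigma_3=-1$, $\sigma_{11}=2u_1$ give $f_2=1/u_1^2$; the numerator $\sigma_1^2-\sigma\sigma_{11}=\tfrac{1}{3}u_1^4+2u_1u_3$ together with $\sigma(2u)=\tfrac{2}{3}(4u_1^3-3u_3)$ yields $g_2=3u_1(2u_1^3+3u_3)/(4u_1^3-3u_3)^2$; and the combined numerator $10u_1^6-33u_1^3u_3+9u_3^2$ indeed factors as $(u_1^3-3u_3)(10u_1^3-3u_3)=3\sigma_{\operatorname{rat}}(10u_1^3-3u_3)$, as one confirms by treating it as a quadratic in $u_1^3$. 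Note that the paper gives no proof of this Proposition at all --- it is quoted from \cite[Lemma 7.2]{AB2019} --- so your direct verification supplies details the paper omits; it is also the natural argument, since in the rational limit $\sigma_{\operatorname{rat}}=\tfrac{1}{3}u_1^3-u_3$ is a polynomial and the whole problem collapses to rational-function algebra in $(u_1,u_3)$, in contrast with the general case of Theorem \ref{2023.4.27.1}, where the elimination via the Kummer-surface relation (Lemma \ref{2023.7.21.333}) and the divisibility machinery (Lemmas \ref{2023.3.24}--\ref{2023.4.23.1}) are needed precisely because $\sigma$ is no longer polynomial. One phrasing caveat: at $\lambda_4=\lambda_6=\lambda_8=\lambda_{10}=0$ the curve $Y^2=X^5$ is singular, so the transcendental definition (\ref{2024.2.13.1}) of $\sigma$ does not literally apply, and the assertion ``the series terminates, so $\sigma=\sigma_{\operatorname{rat}}$ identically'' should instead be read the way the paper sets it up: $\sigma_{\operatorname{rat}}$ is \emph{defined} by substituting $\lambda_4=\lambda_6=\lambda_8=\lambda_{10}=0$ into the expansion (\ref{4.27.1}) of Theorem \ref{2023.4.23.2}, and the Proposition concerns $f_2$ and $g_2$ built from $\sigma_{\operatorname{rat}}$. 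This is a definitional point only and does not affect the validity of your computation.
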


The purpose of this paper is to decompose $f_2-g_2$ into a product of the sigma function and a meromorphic function on $\mathbb{C}^2$ explicitly in the general case.  

\begin{Lemma}\label{2023.7.21.333}
Let $x=\wp_{11}(u)$, $y=\wp_{13}(u)$, and $z=\wp_{33}(u)$. 
The function $\wp_{11}(2u)$ is expressed in terms of $x$, $y$, and $z$ as follows: 
\begin{equation}
\wp_{11}(2u)=\frac{c_0+\lambda_4c_4+\lambda_6c_6+\lambda_8c_8+\lambda_{10}c_{10}+\sum_{i,j,k,\ell}\lambda_4^i\lambda_6^j\lambda_8^k\lambda_{10}^{\ell}a_{ijk\ell}}
{d_0+\lambda_4d_4+\lambda_6d_6+\lambda_8d_8+\lambda_{10}d_{10}+\sum_{i,j,k,\ell}\lambda_4^i\lambda_6^j\lambda_8^k\lambda_{10}^{\ell}b_{ijk\ell}},\label{2023.7.21.1}
\end{equation}
where 
\begin{align*}
c_0&=x^2y^4(5xz-3y^2),\quad c_4=x^4y^2(3xz-y^2),\quad c_6=-2x^6yz,\\
c_8&=x^6(xz+y^2),\quad c_{10}=-2x^7y,\\
d_0&=-x^3y^5,\quad d_4=-x^5y^3,\quad d_6=x^6y^2,\quad d_8=-x^7y,\quad d_{10}=x^8,\\
a_{0000}&=4y^5(xz-y^2),\\
a_{1000}&=y^3(4y+3x^2)(2xz-3y^2),\\
a_{0100}&=2y^2(24y^2z+6x^2yz-2x^4z+6xy^3+5x^3y^2),\\
a_{0010}&=-y^2(56xyz+22x^3z+76y^3+41x^2y^2+8x^4y),\\
a_{2000}&=y^2(20xyz+9x^3z+4y^3-x^2y^2-x^4y),\\
a_{0001}&=2(40y^3z+50x^2y^2z+20x^4yz+3x^6z+70xy^4+30x^3y^3+2x^5y^2),\\
a_{1100}&=2y(2y+x^2)(4yz-3x^2z-6xy^2+x^3y),\\
a_{1010}&=8xy^2z+6x^3yz+3x^5z-88y^4-50x^2y^3-10x^4y^2-3x^6y,\\
a_{0200}&=-4y(4xyz+2x^3z-16y^3-12x^2y^2-3x^4y),\\
a_{3000}&=y^3(12y+5x^2),\\
a_{1001}&=2(24y^2z+18x^2yz+5x^4z+60xy^3+44x^3y^2+11x^5y+2x^7),\\
a_{0110}&=-4(4y^2z+x^2yz-x^4z+14xy^3+10x^3y^2+3x^5y),\\
a_{2100}&=-2xy^2(2y+x^2),\\
a_{0101}&=4(4xyz+3x^3z+36y^3+27x^2y^2+8x^4y+2x^6),\\
a_{0020}&=4xyz+5x^3z-76y^3-41x^2y^2-8x^4y+x^6,\\
a_{2010}&=y(4y^2-x^2y-x^4),\\
a_{0011}&=4(4yz+5x^2z+18xy^2+11x^3y+4x^5),\\ 
a_{2001}&=4x(3y^2+3x^2y+x^4),\\
a_{1110}&=-4xy(2y+x^2),\\
a_{0002}&=4(4xz+16y^2+12x^2y+5x^4), \\ 
a_{1101}&=4(y+x^2)(4y+3x^2),\\
a_{1020}&=-12y^2-9x^2y-x^4,\\
a_{1011}&=4x(2y+3x^2),\\ 
a_{0201}&=8x(2y+x^2),\\
a_{0120}&=-2x(2y+x^2),\\
a_{1002}&=16x^2, \\ 
a_{0111}&=4(4y+3x^2),\\ 
a_{0030}&=-4y-3x^2,\\
a_{0102}&=16x, \\ 
a_{0021}&=-4x, \\ 
b_{0000}&=-y^4(36yz+5x^2z+6xy^2),\\
b_{1000}&=-y^2(24y^2z+26x^2yz+3x^4z+26xy^3+10x^3y^2),\\
b_{0100}&=xy(24y^2z+18x^2yz+2x^4z+39xy^3+12x^3y^2),\\
b_{0010}&=24y^3z-2x^2y^2z-8x^4yz-x^6z-46xy^4-58x^3y^3-14x^5y^2,\\
b_{2000}&=-y^2(4yz-3x^2z+2xy^2+x^3y),\\
b_{0001}&=-4(20xy^2z+10x^3yz+x^5z-15y^4-30x^2y^3-20x^4y^2-4x^6y),\\
b_{1100}&=2xy(4yz-x^2z-5xy^2),\\
b_{1010}&=8y^2z-2x^2yz+x^4z+4xy^3+14x^3y^2+x^5y,\\
b_{0200}&=-4x^2y(z-xy),\\
b_{3000}&=2xy^3,\\
b_{1001}&=2(2x^3z+28y^3+8x^2y^2-8x^4y-x^6),\\
b_{0110}&=-2x(4yz-x^2z-3xy^2+2x^3y),\\
b_{2100}&=-x^2y^2,\\
b_{0101}&=4x(2xz-22y^2-14x^2y-x^4),\\
b_{0020}&=-4yz+3x^2z+22xy^2+7x^3y+2x^5,\\
b_{2010}&=2xy^2,\\
b_{0011}&=8(2xz-y^2-x^2y+x^4),\\
b_{2001}&=12y^2+x^4,\\
b_{1110}&=-2x^2y,\\
b_{0002}&=16(z+xy+x^3),\\
b_{1101}&=-4x(2y-x^2),\\
b_{1020}&=-2xy,\\
b_{1011}&=-8(y-x^2),\\
b_{0201}&=4x^2,\\
b_{0120}&=-x^2,\\
b_{1002}&=16x,\\
b_{0111}&=8x,\\
b_{0030}&=-2x,\\
b_{0102}&=16,\\
b_{0021}&=-4,\\
\end{align*}
and $a_{ijk\ell}=b_{ijk\ell}=0$ otherwise. 
Here, we arrange $a_{ijk\ell}$ and $b_{ijk\ell}$ in ascending order of the value $4i+6j+8k+10\ell$. 
Note that $c_i$ and $d_i$ are the homogeneous polynomials in $x$, $y$, and $z$ of degree $8$. 
Further, $a_{ijk\ell}$ and $b_{ijk\ell}$ are the polynomials in $x$, $y$, and $z$ of degree at most $7$. 
The exponents on $z$ appearing in $c_i$, $d_i$, $a_{ijk\ell}$, and $b_{ijk\ell}$ are at most $1$. 
\end{Lemma}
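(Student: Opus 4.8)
The plan is to carry out the three-step reduction announced in the introduction, regarding the asserted formula as an identity of rational functions in $x=\wp_{11}(u)$, $y=\wp_{13}(u)$, $z=\wp_{33}(u)$ whose two sides I will bring to a common normal form.

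First, I would write down a double-angle formula for $\wp_{11}$. The genus-one prototype $\wp(2u)=\tfrac14(\wp''/\wp')^2-2\wp(u)$ is obtained by specializing the addition theorem to the diagonal $v\to u$, where the indeterminate form is resolved by Taylor expanding in $v-u$ and so brings in one derivative order above the symbols present in the addition law. Imitating this in genus two, I would specialize the addition theorem for $\wp_{11}(u+v)$ to $v\to u$; the result expresses $\wp_{11}(2u)$ as a quotient of polynomials in the logarithmic derivatives $\wp_{ij}(u)$, $\wp_{ijk}(u)$, $\wp_{ijk\ell}(u)$. Since $\wp_{11}(2u)$ is an even function of $u$, the odd third-order symbols $\wp_{ijk}$ can be cleared from numerator and denominator so that only their even products $\wp_{ijk}\wp_{\ell mn}$, together with the even fourth-order symbols $\wp_{ijk\ell}$, survive.

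Second, I would remove every derivative of order greater than two. Each even fourth-order symbol $\wp_{ijk\ell}$, and each product $\wp_{ijk}\wp_{\ell mn}$ of third-order symbols, equals an explicit polynomial in $x$, $y$, $z$ with coefficients in $\mathbb{Q}[\lambda_4,\lambda_6,\lambda_8,\lambda_{10}]$, by the fundamental relations for the genus-two Kleinian $\wp$-functions. Substituting these into the double-angle formula turns $\wp_{11}(2u)$ into an honest rational function of $x$, $y$, $z$, at the cost of introducing powers $z^k$ with $k\ge2$ in both numerator and denominator.

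Third, I would lower the degree in $z$. The functions $x$, $y$, $z$ satisfy a single algebraic relation, the defining equation of the Kummer surface, which is quadratic in $z$ with a nonzero numerical leading coefficient; solving it for $z^2$ yields a rewriting rule that expresses $z^2$, and therefore every $z^k$ with $k\ge2$, as a polynomial linear in $z$. Applying this rule repeatedly and then cancelling the greatest common factor of numerator and denominator produces the shape (\ref{2023.7.21.1}) in which the exponent of $z$ is everywhere at most $1$; reading off the remaining coefficients gives the tables for $c_i$, $d_i$, $a_{ijk\ell}$, $b_{ijk\ell}$. The homogeneity and degree assertions ($c_i,d_i$ homogeneous of degree $8$ in $x,y,z$; the $a$'s and $b$'s of degree at most $7$) then follow by tracking the weights, with $\deg\lambda_{2i}=2i$ and $x$, $y$, $z$ weighted accordingly, through each substitution.

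The step I expect to be the real obstacle is the controlled $z$-reduction rather than any single ingredient: one must verify that the Kummer relation can indeed be solved for $z^2$ with a unit leading coefficient (so that the rewriting stays polynomial and introduces no spurious denominators), and that after the reduction the enormous numerator and denominator collapse to exactly the displayed expressions. I would manage the bookkeeping by grouping all terms according to the weight $4i+6j+8k+10\ell$, exactly as the coefficient tables are already organized, and I would confirm the final identity by an independent computer-algebra check, comparing the power-series expansions of both sides about the origin by means of Theorem \ref{2023.4.23.2}.
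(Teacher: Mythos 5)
Your plan is, in outline, the same as the paper's: a double-angle formula for $\wp_{11}$, reduction of all higher logarithmic derivatives to $x,y,z$ via the fundamental relations, elimination of $z^k$ with $k\ge2$ via the Kummer relation, and a computer-algebra verification of the resulting tables. Two remarks on the first stage: the paper does not re-derive the double-angle formula from the addition theorem as you propose, but cites it directly (Baker, and Uchida) in the form $\wp_{11}(2u)=\wp_{11}(u)+\bigl(\phi_1^2-\phi\,\phi_{11}\bigr)/\bigl(4\phi^2\bigr)$ with $\phi=\wp_{11}\wp_{133}-\wp_{13}\wp_{113}+\wp_{333}$, which is cleaner than your unexecuted limiting argument; and your parity heuristic for clearing odd third-order symbols is realized concretely there: $\phi_1$ is already a polynomial in $x,y,z$ by (\ref{7.3.4}), while $\phi$ and $\phi_{11}$ are linear in $\wp_{111},\wp_{113}$, so $\phi^2$ and $\phi\,\phi_{11}$ involve only the quadratic products handled by (\ref{5.22.1})--(\ref{5.22.3}).

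There is, however, one genuine error, sitting precisely at the step you single out as the crux. The Kummer relation (\ref{2023.7.19.1}) reads $(\wp_{11}^2+4\wp_{13})\wp_{33}^2=2S\wp_{33}-T$: its leading coefficient in $z$ is $x^2+4y$, a polynomial, \emph{not} a nonzero numerical constant. So the verification you propose to perform would fail, and there is no rewriting rule expressing $z^2$ as a polynomial linear in $z$; one only gets the rational rule $z^2=(2Sz-T)/(x^2+4y)$. The repair is routine—since $\wp_{11}(2u)$ is a quotient anyway, apply the rational rule and then multiply numerator and denominator by a sufficiently high power of $x^2+4y$, which is exactly how the elimination in the paper should be read—but as written your claim that the rewriting ``stays polynomial and introduces no spurious denominators'' is false, and your weight and degree bookkeeping in the final paragraph must be redone with the cleared factors of $x^2+4y$ taken into account before it can reproduce the stated degree bounds on $c_i$, $d_i$, $a_{ijk\ell}$, $b_{ijk\ell}$.
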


\begin{proof}
In \cite{A-E-E-2004}, \cite[pp.~39, 48]{Baker-1907}, \cite[Corollaries 3.1.2, 3.1.3, Theorem 3.2]{BEL-97-1}, and \cite[pp.~228--229]{BEL-2012}, the following formulae are given: 
\begin{align}
\wp_{111}^2&=4\wp_{33}+4\lambda_4\wp_{11}+4\wp_{11}^3+4\wp_{13}\wp_{11}+4\lambda_6,\label{5.22.1}\\
\wp_{111}\wp_{113}&=2\lambda_8+2\wp_{13}^2-2\wp_{33}\wp_{11}+2\lambda_4\wp_{13}+4\wp_{13}\wp_{11}^2,\label{5.22.2}\\
\wp_{113}^2&=4\lambda_{10}-4\wp_{33}\wp_{13}+4\wp_{11}\wp_{13}^2,\label{5.22.3}\\
\wp_{133}&=\wp_{111}\wp_{13}-\wp_{11}\wp_{113},\label{3.23.112}\\
\wp_{333}&=2\wp_{11}\wp_{133}-\wp_{33}\wp_{111}-\wp_{13}\wp_{113}-\lambda_4\wp_{113}, \label{3.23.113}\\
\wp_{1111}&=6\wp_{11}^2+4\wp_{13}+2\lambda_4,\label{7.3.1}\\
\wp_{1113}&=6\wp_{11}\wp_{13}-2\wp_{33}.\label{7.3.2}
\end{align}
In \cite[p.~129]{Baker-1907} and \cite[Proposition 4.10]{uchida}, for $u\in\mathbb{C}^2$, the following formula is given:  
\begin{equation}
\wp_{11}(2u)=\wp_{11}(u)+\frac{\phi_1(u)^2-\phi(u)\phi_{11}(u)}{4\phi(u)^2},\label{7.3.6}
\end{equation}
where 
\[\phi(u)=\wp_{11}(u)\wp_{133}(u)-\wp_{13}(u)\wp_{113}(u)+\wp_{333}(u),\]
$\phi_1=\partial_{u_1}\phi$, and $\phi_{11}=\partial_{u_1}\phi_1$.  
From (\ref{3.23.112}) and (\ref{3.23.113}), we can express the function $\phi$ in terms of $\wp_{11}, \wp_{13}, \wp_{33}, \wp_{111}$, and $\wp_{113}$ as follows: 
\begin{equation}
\phi=3\wp_{11}\wp_{13}\wp_{111}-\wp_{33}\wp_{111}-2\wp_{13}\wp_{113}-3\wp_{11}^2\wp_{113}-\lambda_4\wp_{113}.\label{7.3.3}
\end{equation}
By differentiating both sides of (\ref{7.3.3}) with respect to $u_1$ and using (\ref{5.22.1})--(\ref{3.23.112}), (\ref{7.3.1}), and (\ref{7.3.2}), we can express the function $\phi_1$ in terms of $\wp_{11}, \wp_{13}$, and $\wp_{33}$ as follows: 
\begin{equation}
\phi_1=4(\wp_{11}^2\wp_{33}-\wp_{11}\wp_{13}^2+4\wp_{13}\wp_{33}+\lambda_4\wp_{11}\wp_{13}+2\lambda_6\wp_{13}-\lambda_8\wp_{11}-2\lambda_{10}).\label{7.3.4}
\end{equation}
By differentiating both sides of (\ref{7.3.4}) with respect to $u_1$ and using (\ref{3.23.112}), we can express the function $\phi_{11}$ in terms of $\wp_{11}, \wp_{13}, \wp_{33}, \wp_{111}$, and $\wp_{113}$ as follows: 
\begin{align}
\phi_{11}&=4(\wp_{11}^2\wp_{13}\wp_{111}-\wp_{11}^3\wp_{113}-6\wp_{11}\wp_{13}\wp_{113}+3\wp_{13}^2\wp_{111}+2\wp_{11}\wp_{33}\wp_{111}+4\wp_{33}\wp_{113}\nonumber\\
&+\lambda_4\wp_{11}\wp_{113}+\lambda_4\wp_{13}\wp_{111}+2\lambda_6\wp_{113}-\lambda_8\wp_{111}).\label{7.3.5}
\end{align}
In \cite[p.~186]{Hudson-1990}, \cite[p.~38]{Baker-1907}, and \cite[p.~229]{BEL-2012}, the following relation between $\wp_{11}, \wp_{13}$, and $\wp_{33}$ is given: 
\begin{equation}
(\wp_{11}^2+4\wp_{13})\wp_{33}^2=2S\wp_{33}-T,\label{2023.7.19.1}
\end{equation}
where 
\begin{align*}
S&=\wp_{11}\wp_{13}^2-\lambda_4\wp_{11}\wp_{13}-2\lambda_6\wp_{13}+\lambda_8\wp_{11}+2\lambda_{10},\\
T&=\wp_{13}^4+2\lambda_4\wp_{13}^3-4\lambda_6\wp_{11}\wp_{13}^2+2\lambda_8\wp_{13}(\wp_{13}+2\wp_{11}^2)-4\lambda_{10}\wp_{11}(\wp_{11}^2+\wp_{13})+\lambda_4^2\wp_{13}^2\\
&+2\lambda_4\lambda_8\wp_{13}-4\lambda_4\lambda_{10}\wp_{11}-4\lambda_6\lambda_{10}+\lambda_8^2.
\end{align*}
From (\ref{7.3.6})--(\ref{7.3.5}) and (\ref{5.22.1})--(\ref{5.22.3}), we can express $\wp_{11}(2u)$ in terms of $\wp_{11}(u), \wp_{13}(u)$, and $\wp_{33}(u)$. 
By using (\ref{2023.7.19.1}), we eliminate $\wp_{33}(u)^k$ with $k\ge2$ from this expression of $\wp_{11}(2u)$. 
Then we obtain (\ref{2023.7.21.1}). 
\end{proof}

\begin{Remark}
In \cite[p.~130]{Baker-1907}, it is mentioned that $\wp_{11}(2u)$ can be expressed in terms of $\wp_{11}(u)$, $\wp_{13}(u)$, and $\wp_{33}(u)$. 
In \cite{Baker-1907}, $\wp_{11}(2u)$ is not expressed in terms of $\wp_{11}(u)$, $\wp_{13}(u)$, and $\wp_{33}(u)$ explicitly. 
\end{Remark}

\begin{Remark}
In Lemma \ref{2023.7.21.333}, the author used the computer algebra system Maxima for computation. 
\end{Remark}

\begin{Theorem}\label{2023.4.27.1}
We have the following decomposition: 
\begin{equation}
f_2-g_2=\sigma \frac{A}{B},\label{2023.7.21.2}
\end{equation}
where 
\begin{align*}
A&=\alpha_0+\lambda_4\alpha_4+\lambda_6\alpha_6+\lambda_8\alpha_8+\lambda_{10}\alpha_{10}+\sigma\sum_{i,j,k,\ell}\lambda_4^i\lambda_6^j\lambda_8^k\lambda_{10}^{\ell}\sigma^{14}(\sigma_1a_{ijk\ell}+2\sigma_3b_{ijk\ell}),\\
B&=-2\sigma_1\left(\beta_0+\lambda_4\beta_4+\lambda_6\beta_6+\lambda_8\beta_8+\lambda_{10}\beta_{10}+\sigma^2\sum_{i,j,k,\ell}\lambda_4^i\lambda_6^j\lambda_8^k\lambda_{10}^{\ell}\sigma^{14}b_{ijk\ell}\right),\\
\alpha_0&=(\sigma_1^2-\sigma\sigma_{11})^2(\sigma_1\sigma_3-\sigma\sigma_{13})^4\\
&\quad \times\left\{\sigma_1(8\sigma_1\sigma_3\sigma_{13}-5\sigma_1^2\sigma_{33}-3\sigma_3^2\sigma_{11})+\sigma(5\sigma_1\sigma_{11}\sigma_{33}-2\sigma_3\sigma_{11}\sigma_{13}-3\sigma_1\sigma_{13}^2)\right\},\\
\alpha_4&=(\sigma_1^2-\sigma\sigma_{11})^4(\sigma_1\sigma_3-\sigma\sigma_{13})^2\\
&\quad \times\left\{\sigma_1(4\sigma_1\sigma_3\sigma_{13}-3\sigma_1^2\sigma_{33}-\sigma_3^2\sigma_{11})+\sigma(3\sigma_1\sigma_{11}\sigma_{33}-2\sigma_3\sigma_{11}\sigma_{13}-\sigma_1\sigma_{13}^2)\right\},\\
\alpha_6&=2(\sigma_1^2-\sigma\sigma_{11})^6(\sigma_1\sigma_3-\sigma\sigma_{13})(\sigma_1\sigma_{33}-\sigma_3\sigma_{13}),\\
\alpha_8&=(\sigma_1^2-\sigma\sigma_{11})^6\left\{\sigma_1(\sigma_3^2\sigma_{11}-\sigma_1^2\sigma_{33})+\sigma(\sigma_1\sigma_{11}\sigma_{33}-2\sigma_3\sigma_{11}\sigma_{13}+\sigma_1\sigma_{13}^2)\right\},\\
\alpha_{10}&=2(\sigma_1^2-\sigma\sigma_{11})^7(\sigma_1\sigma_{13}-\sigma_3\sigma_{11}),\quad \beta_0=-(\sigma_1^2-\sigma\sigma_{11})^3(\sigma_1\sigma_3-\sigma\sigma_{13})^5,\\
\beta_4&=-(\sigma_1^2-\sigma\sigma_{11})^5(\sigma_1\sigma_3-\sigma\sigma_{13})^3,\quad \beta_6=(\sigma_1^2-\sigma\sigma_{11})^6(\sigma_1\sigma_3-\sigma\sigma_{13})^2,\\ 
\beta_8&=-(\sigma_1^2-\sigma\sigma_{11})^7(\sigma_1\sigma_3-\sigma\sigma_{13}),\quad \beta_{10}=(\sigma_1^2-\sigma\sigma_{11})^8. 
\end{align*}
Here, $a_{ijk\ell}$ and $b_{ijk\ell}$ are defined in Lemma \ref{2023.7.21.333}. 
Note that $\sigma^{14}a_{ijk\ell}$ and $\sigma^{14}b_{ijk\ell}$ can be expressed as polynomials in $\sigma, \sigma_1, \sigma_3, \sigma_{11}, \sigma_{13}$, and $\sigma_{33}$.  
The holomorphic functions $A$ and $B$ are not identically equal to $0$ on $W$. 
\end{Theorem}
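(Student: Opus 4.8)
The plan is to convert the rational formula for $\wp_{11}(2u)$ of Lemma~\ref{2023.7.21.333} into an expression in the sigma function and its derivatives. Set $p=\sigma_1^2-\sigma\sigma_{11}$, $q=\sigma_1\sigma_3-\sigma\sigma_{13}$, $r=\sigma_3^2-\sigma\sigma_{33}$, so that $\wp_{11}=p/\sigma^2$, $\wp_{13}=q/\sigma^2$, $\wp_{33}=r/\sigma^2$ by the definition $\wp_{ij}=-\partial_{u_i}\partial_{u_j}\log\sigma$. Substituting $x=p/\sigma^2$, $y=q/\sigma^2$, $z=r/\sigma^2$ into (\ref{2023.7.21.1}) and clearing the common power $\sigma^{-16}$, I obtain $\wp_{11}(2u)=\mathcal N/\mathcal D$ with $\mathcal N,\mathcal D$ polynomials in $\sigma,\sigma_1,\sigma_3,\sigma_{11},\sigma_{13},\sigma_{33}$. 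This clearing is legitimate precisely because each $c_i,d_i$ is homogeneous of degree $8$ (giving the terms $c_i(p,q,r),d_i(p,q,r)$ over $\sigma^{16}$), while each $a_{ijk\ell},b_{ijk\ell}$ has degree at most $7$, so that $\sigma^{14}a_{ijk\ell}$ and $\sigma^{14}b_{ijk\ell}$ are honest polynomials and contribute a $\sigma^2$-tail. Thus $\mathcal D$ is exactly the large bracket multiplying $-2\sigma_1$ in $B$ (note $\beta_i=d_i(p,q,r)$), and $\mathcal N$ carries the $c_i$-terms together with the matching $\sigma^2$-tail.

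Next I would write $f_2-g_2=-\sigma_3/\sigma_1-\mathcal N/(2\mathcal D)=(\sigma_1\mathcal N+2\sigma_3\mathcal D)/(-2\sigma_1\mathcal D)$, so the denominator is already $B=-2\sigma_1\mathcal D$, and it remains to pull one factor of $\sigma$ from the numerator. The $a_{ijk\ell},b_{ijk\ell}$-tail of $\sigma_1\mathcal N+2\sigma_3\mathcal D$ manifestly carries a factor $\sigma^2$, hence, after division by $\sigma$, contributes exactly the $\sigma$-tail of $A$. For the leading part the divisibility reduces, index by index, to the polynomial identity $\sigma_1c_i(\sigma_1^2,\sigma_1\sigma_3,\sigma_3^2)+2\sigma_3d_i(\sigma_1^2,\sigma_1\sigma_3,\sigma_3^2)=0$, obtained by reducing modulo $\sigma$ (under which $p\equiv\sigma_1^2$, $q\equiv\sigma_1\sigma_3$, $r\equiv\sigma_3^2$); these hold term by term, and dividing the $i$-th leading piece by $\sigma$ yields precisely $\alpha_i$. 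This establishes (\ref{2023.7.21.2}).

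For $B\not\equiv0$ on $W$ I use $B=-2\sigma_1\mathcal D$ together with $\sigma_1\not\equiv0$ on $W$ by (\ref{2023.6.10.111}). On $W$ one has $\sigma=0$, so $p=\sigma_1^2$, $q=\sigma_1\sigma_3$, $r=\sigma_3^2$, and $\sigma_3=-X\sigma_1$ by Lemma~\ref{2023.3.4.2}; substituting these into the five $\beta$-terms makes them telescope, and $M(X)=Y^2$ collapses $\mathcal D|_W$ to $\sigma_1^{16}Y^2$, which is not identically $0$ on $W$. Lemma~\ref{2023.4.22.11} then gives $B\not\equiv0$ on $W$.

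The delicate point is $A\not\equiv0$ on $W$. Restricting $A$ to $W$ kills its $\sigma$-tail; after using $\sigma_3=-X\sigma_1$ (Lemma~\ref{2023.3.4.2}) and the relation $\sigma_{11}X^2+2\sigma_{13}X+\sigma_{33}=2\sigma_1Y$ on $W$ coming from $g_5=Y$ (Lemma~\ref{2023.3.4.3}) to eliminate $\sigma_{33}$, the curve identities $5X^4+3\lambda_4X^2+2\lambda_6X+\lambda_8=M'(X)$ and $2X^5+2\lambda_4X^3+2\lambda_6X^2+2\lambda_8X+2\lambda_{10}=2Y^2$ collapse $A|_W$ to $-2\sigma_1^{14}Y\bigl(\sigma_1^2M'(X)+Y(\sigma_3\sigma_{11}-\sigma_1\sigma_{13})\bigr)$. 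This bracket still involves the second logarithmic derivatives $\sigma_{11},\sigma_{13}$, which are not algebraic in $(X,Y)$, so no finite algebraic reduction closes the argument; this is the main obstacle. To finish I would expand along the branch of $W$ through the origin, parametrized by the local coordinate $t$ at $\infty$ with $u_1=t+\cdots$ and $u_3=\tfrac13t^3+\cdots$: the Schur-function leading behaviour $\sigma=\tfrac13u_1^3-u_3+\cdots$ of Theorem~\ref{2023.4.23.2} gives $\sigma_1\sim t^2$, $\sigma_{11}\sim2t$, $\sigma_{13}=O(t^3)$, whence $\sigma_1^2M'(X)+Y(\sigma_3\sigma_{11}-\sigma_1\sigma_{13})$ has leading term $3t^{-4}\neq0$. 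Hence $A|_W$ is a nonzero function of $t$, so $A\not\equiv0$ on $W$. It is exactly the power-series normalization of $\sigma$—its expansion beginning at the Schur function with polynomial coefficients in the $\lambda_{2i}$—that supplies this nonzero leading coefficient and overcomes the obstacle.
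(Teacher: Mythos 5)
Your proposal is correct, and on the decomposition itself it follows the paper's route: the paper's proof is exactly the substitution $\wp_{11}=(\sigma_1^2-\sigma\sigma_{11})/\sigma^2$, $\wp_{13}=(\sigma_1\sigma_3-\sigma\sigma_{13})/\sigma^2$, $\wp_{33}=(\sigma_3^2-\sigma\sigma_{33})/\sigma^2$ into (\ref{2023.7.21.1}), clearing $\sigma^{16}$, and then "drawing $\sigma$" from $f_2-g_2$; your mod-$\sigma$ identities $\sigma_1c_i(\sigma_1^2,\sigma_1\sigma_3,\sigma_3^2)+2\sigma_3d_i(\sigma_1^2,\sigma_1\sigma_3,\sigma_3^2)=0$ just make explicit why that factor comes out, and the quotients do reproduce the stated $\alpha_i$ (e.g.\ $\sigma_1c_{10}(p,q,r)+2\sigma_3d_{10}(p,q,r)=2\sigma p^7(\sigma_1\sigma_{13}-\sigma_3\sigma_{11})=\sigma\alpha_{10}$ with $p=\sigma_1^2-\sigma\sigma_{11}$, $q=\sigma_1\sigma_3-\sigma\sigma_{13}$). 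Where you genuinely diverge is the nonvanishing of $A$ and $B$ on $W$. The paper stays purely algebraic: it writes $A=\sigma_1^9A_1+\sigma A_2$ and $B=2\sigma_1^{12}B_1+\sigma B_2$, gets $B_1(0)=-1$ at $0\in W$, and shows $A_1$ cannot be a multiple of $\sigma$ because its expansion begins with $-6u_1$ (weight $1$) while $\sigma$ begins at the Schur term $\frac13u_1^3-u_3$ (weight $3$); Lemma \ref{2023.3.24} then gives $A_1\not\equiv0$ on $W$ and Lemma \ref{2023.4.22.11} finishes. You instead restrict to $W$ and reduce geometrically: I checked your reductions, and both are right --- your $\mathcal D|_W=\sigma_1^{16}Y^2$ agrees with the paper's leading part since $B_1=-\sigma_1^5M(-\sigma_3/\sigma_1)$, and $A|_W=-2\sigma_1^{14}Y\bigl(\sigma_1^2M'(X)+Y(\sigma_3\sigma_{11}-\sigma_1\sigma_{13})\bigr)$ follows from $\sigma_3=-X\sigma_1$, the $g_5$-relation, and $XM(X)/X$-type collapsing exactly as you describe. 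Your final local expansion is also sound: the normalization of Theorem \ref{2023.4.23.2} forces $u_1=t+O(t^3)$, $\sigma_1\sim t^2$, $\sigma_{11}\sim2t$, $\sigma_{13}=O(t^3)$ (since all corrections to $\sigma$ have weight $\ge7$), and the sign convention $\omega_1=-\frac{X}{2Y}{\rm d}X$ pins $Y=t^{-5}(1+o(1))$, giving the bracket leading coefficient $5-2=3$; even with the opposite sheet you would get $7t^{-4}$, so the conclusion is robust. (Your $A|_W\sim-6t^{19}$ matches the paper's $\sigma_1^9A_1\sim t^{18}\cdot(-6t)$, a reassuring cross-check.) What each approach buys: the paper's divisibility argument is shorter and never parametrizes $W$, exploiting only the weight gap between $-6u_1$ and the Schur function; your argument costs the extra algebraic reduction and sign bookkeeping at $\infty$, but yields sharper information, namely explicit closed forms for $A|_W$ and $B|_W$ in terms of $(X,Y)$ and the second derivatives of $\sigma$, which locate the zero divisor of $f_2-g_2$ along $W$ rather than merely certifying nonvanishing.
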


\begin{proof}
We substitute 
\[
\wp_{11}=\frac{\sigma_1^2-\sigma\sigma_{11}}{\sigma^2},\;\;\;\;\wp_{13}=\frac{\sigma_1\sigma_3-\sigma\sigma_{13}}{\sigma^2},\;\;\;\;\wp_{33}=\frac{\sigma_3^2-\sigma\sigma_{33}}{\sigma^2}
\]
into the right hand side of (\ref{2023.7.21.1}) and multiply the numerator and the denominator of the right hand side of (\ref{2023.7.21.1}) by $\sigma^{16}$. 
Then we can draw $\sigma$ from $f_2-g_2$ and obtain (\ref{2023.7.21.2}). 
We can express $A$ in the form of   
\[A=\sigma_1^9A_1+\sigma A_2,\]
where 
\begin{align*}
A_1&=\sigma_3^4(8\sigma_1\sigma_3\sigma_{13}-5\sigma_1^2\sigma_{33}-3\sigma_3^2\sigma_{11})+\lambda_4\sigma_1^2\sigma_3^2(4\sigma_1\sigma_3\sigma_{13}-3\sigma_1^2\sigma_{33}-\sigma_3^2\sigma_{11})\\
&+2\lambda_6\sigma_1^4\sigma_3(\sigma_1\sigma_{33}-\sigma_3\sigma_{13})+\lambda_8\sigma_1^4(\sigma_3^2\sigma_{11}-\sigma_1^2\sigma_{33})+2\lambda_{10}\sigma_1^5(\sigma_1\sigma_{13}-\sigma_3\sigma_{11})
\end{align*}
and $A_2$ is a holomorphic function on $\mathbb{C}^2$. 
The power series expansion of $A_1$ around the origin has the following form: 
\[A_1(u)=-6u_1+\sum_{i+3j\ge5}\zeta_{i,j}u_1^iu_3^j,\]
where $\zeta_{i,j}$ is a homogeneous polynomial in $\mathbb{Q}[\lambda_4, \lambda_6, \lambda_8, \lambda_{10}]$
of degree $i+3j-1$ if $\zeta_{i,j}\neq0$. 
From Theorem \ref{2023.4.23.2}, it is impossible to decompose $A_1$ into a product of the sigma function and a holomorphic function on $\mathbb{C}^2$. 
Therefore, from Lemma \ref{2023.3.24}, $A_1$ is not identically equal to $0$ on $W$. 
Since $\sigma_1$ is not identically equal to $0$ on $W$, the function $A$ is not identically equal to $0$ on $W$. 
We can express $B$ in the form of   
\[B=2\sigma_1^{12}B_1+\sigma B_2,\]
where 
\[B_1=\sigma_3^5+\lambda_4\sigma_1^2\sigma_3^3-\lambda_6\sigma_1^3\sigma_3^2+\lambda_8\sigma_1^4\sigma_3-\lambda_{10}\sigma_1^5\]
and $B_2$ is a holomorphic function on $\mathbb{C}^2$. 
From $\sigma_1(0)=0$ and $\sigma_3(0)=-1$, we have $B_1(0)=-1$. 
Thus, $B_1$ is not identically equal to $0$ on $W$. Since $\sigma_1$ is not identically equal to $0$ on $W$, the function $B$ is not identically equal to $0$ on $W$. 
\end{proof}

\vspace{2ex}

{\bf Acknowledgements.} This work was supported by JSPS KAKENHI Grant Number JP21K03296 and was partly supported by MEXT Promotion of Distinctive Joint Research Center Program JPMXP0723833165.

\end{document}